\theoremstyle{plain}
\newtheorem{lem}{Lemma}[section]
\newtheorem{prop}[lem]{Proposition}
\newtheorem{rem}[lem]{Remark}
\newtheorem{theorem}[lem]{Theorem}
\newtheorem{lemma}[lem]{Lemma}
\newtheorem{proposition}[lem]{Proposition}
\theoremstyle{definition}
\newtheorem{definition}{Definition}
\theoremstyle{remark}
\newtheorem{remark}{Remark}
\numberwithin{equation}{section}
\def\ffi{\varphi}
\def\eps{\varepsilon}
\def\dst{\displaystyle}
\def\bb{{\mathcal B}}
\def\ff{{\mathcal F}}
\def\qq{{\mathcal Q}}
\newcommand{\NN}{\mathbb N}
\newcommand{\RR}{\mathbb R}
\newcommand{\TT}{\mathbb T}
\newcommand{\R}{\mathbb R}
\newcounter{rea}
\newcommand{\E}{\mathbf E}
\newcounter{rek}
\newcounter{rep}
\DeclareMathOperator{\supp}{supp}
\DeclareMathOperator{\vect}{Span}
\newcommand{\norm}[1]{{\left\|{#1}\right\|}} 
\newcommand{\beq}{\begin{equation}}
\newcommand{\eeq}{\end{equation}}
\newcommand{\abs}[1]{{\left|{#1}\right|}}
\newcommand{\scal}[1]{{\left\langle{#1}\right\rangle}}
\begin{document}

\title[The spectrum of the Sinc  Kernel  Operator]{Non-Asymptotic behaviour of the spectrum of  the Sinc  Kernel  Operator and Related Applications.} 

\author{Aline Bonami}
\address{A.B.\,: F\'ed\'eration Denis-Poisson, MAPMO-UMR 6628, Department of Mathematics, University of Orl\'eans, 45067 Orl\'eans cedex 2, France.}
\email{aline.bonami@univ-orleans.fr}

\author{Philippe Jaming}
\address{P.J.\,: Univ. Bordeaux, IMB, UMR 5251, F-33400 Talence, France.
CNRS, IMB, UMR 5251, F-33400 Talence, France.}
\email{Philippe.Jaming@math.u-bordeaux.fr}

\author{Abderrazek Karoui}
\address{A.K.\,: University of Carthage,
Department of Mathematics, Faculty of Sciences of Bizerte, Tunisia}
\email{abderrazek.karoui@fsb.rnu.tn}

\thanks{This work was supported in part by the French-Tunisian  CMCU 15G 1504 project and the
DGRST  research grant UR 13ES47.}

\subjclass[2010]{42C10}
\keywords{prolate spheroidal wave functions; sinc-kernel; Remez inequality; Tur\`an-Nazarov inequality; Gaussian Unitary Ensemble}

\begin{abstract}
Prolate spheroidal wave functions have recently attracted a lot of attention in applied harmonic analysis, signal processing and mathematical physics. They are eigenvectors of the Sinc-kernel operator $\mathcal{Q}_c:$ the time- and band-limiting operator. The corresponding eigenvalues
play a key role and it is the aim of this paper to obtain precise non-asymptotic estimates for these eigenvalues, within the three main
regions of the spectrum of $\mathcal{Q}_c$.  This issue  is rarely studied in the literature, while  the asymptotic behaviour of the spectrum of $\mathcal{Q}_c$ has been well established  from the sixties.
As  applications of our non-asymptotic estimates, we first  provide estimates for the constants appearing in  Remez and Tur\`an-Nazarov type concentration inequalities. Then, we give an estimate for the hole probability, associated with a random matrix from the Gaussian Unitary Ensemble (GUE).
\end{abstract}
\maketitle

\section{Introduction}
The aim of this paper is to give precise non-asymptotic estimates to the
eigenvalues associated with the prolate spheroidal wave functions (PSFWs). They are defined as  
the eigenvectors of the time and band-limiting operator given by the sinc-kernel
$$
\qq_c f(x)=\int_{-1}^1\frac{\sin c(y-x)}{\pi(y-x)}f(y)\,\mathrm{d}y.
$$
Here, we recognize  the Sinc convolution kernel which is the reproducing kernel of
the Paley-Wiener space of $c-$bandlimited functions. From another perspective, this kernel is the well-known Dyson Sine kernel which is 
related to the Wigner-Mehta-Dyson universality conjecture about the local statistics of the eigenvalues of random Wigner matrices.
The  eigenvalues $\big(\lambda_n(c)\big)_n$ corresponding to $\qq_c$ play important roles in a wide range of scientific area such as signal 
processing, mathematical-physics, random matrices, numerical analysis including spectral methods, etc. 

The PSFWs have been introduced into signal processing in the work of Landau, Pollak and 
Slepian \cite{LP1,LP2,Slepian1}. They provide an orthonormal basis of the Paley-Wiener space that is optimally
concentrated in the time domain. One striking result in this field is that $\Omega$-band limited functions that have 
their energy concentrated in an interval of length $T$ are well represented by the its expansion restricted to the
first $4\Omega T$ coefficients in that basis. The heuristics of this result is predicted by Shannon's sampling theorem.
Lately, the PSWFs  have been used  in numerical analysis  \cite{XRY}, including the numerical schemes for solving certain PDEs
(see {\it e.g.} \cite{Wang2}).\\

\noindent For random matrices,
Dyson \cite{Dy}, (see also the work of  de Cloizeaux and Mehta \cite{dCM,Me}) showed that the repulsion between eigenvalues of the Gaussian unitary
ensemble (the GUE) could be described asymptotically in terms of the determinantal point process associated with the sine kernel.
In particular, it is well known, see for example \cite{Me, Tao} 
that if $M_n$ is an $n\times n$ random matrix from the GUE and if $A_n=\sqrt{n} M_n$ is the fine scaled  version of $M_n$,
and if $E_2^n(0,c)$ is the probability that there is no eigenvalues of $A_n$ in  an interval of length $\pi c$ and located near 
the centre of the bulk region of the spectrum, then $E_2^n(0,c)$ converges as $n\rightarrow +\infty$ to
the Fredholm determinant 
\begin{equation}
\label{eq:mehta}
E_2(0,c)=\det(Id - \qq_{\pi c/2})=\dst\prod_{j=0}^\infty \Big(1-\lambda_j\big(\pi c/2\big)\Big).
\end{equation}
For more details on the contributions of the PSWFs on these and other applications, the reader is refereed to
\cite{HL} and the relevant references therein.  

Nowadays, there exists a rich literature devoted to the asymptotic behaviour as well as the numerical computation of the eigenvalues 
$\bigl(\lambda_n(c)\bigr)_n,$ to cite but a few
\cite{Bonami-Karoui3, Fuchs, Kuznetsov, Landau1, Slepian3, Widom}. 
These works show that the eigenvalues exhibit three kinds of behaviours, 
defined by the following three distinct regions of the spectrum via a critical index $n_c=\dst\frac{2}{\pi}c$:\\ 

\noindent
$\bullet$ The slow evolution region, where  for $n_c-n \gtrsim \log(c)$,
the change in the $\lambda_n(c)$'s is very slow. For most of the values of these $n,$ we have  $\lambda_n(c)\approx \lambda_0(c)\approx 1$ when $c$ is large enough.\\

\noindent
$\bullet$ The fast decay region, where  for $n-n_c\gtrsim \log(c)$ we have $\lambda_n(c)\to 0$ at a super-exponential speed.\\

\noindent
$\bullet$ The plunge region, which is a transition region between the two previous ones. It is defined 
for those values of $n,$ with  $|n-n_c| \lesssim \log(c).$ Thus the width of this region is  $\approx \log(c)$ for $c$ large.\\

Let us give more details on the literature concerning the asymptotic behaviours of the  $\lambda_n(c)$'s inside the three different 
regions.  Widom in \cite{Widom} has given an asymptotic formula for the super-exponential 
decay rate of the $\lambda_n(c)$. Moreover, Fuchs in \cite{Fuchs} has shown that for fixed  integer $n\geq 0$, the eigenvalue 
$\lambda_n(c)$ converges to $1$ at an optimal exponential rate. Also, Landau and Widom in \cite{Landau0} 
have given a precise asymptotic estimate, valid for $c\gg 1$ of the number of eigenvalues lying in the plunge region. As a consequence
of this estimate, one also has an asymptotic estimate for the eigenvalues decay inside the plunge region, as well as a precise
estimate of the width of this region. More details on these different asymptotic results will be given in the next section.\\
\bigskip

Later on, we study in more  details each of the previous regions. In particular, we give some precise details on the slow, medium and rapid 
changes in the spectrum, along the previous three regions.

\bigskip

\noindent
While the knowledge of the asymptotic behaviour of the $\lambda_n(c)$'s (either with respect to $n$ or with respect to $c$ for fixed $n$) 
is sufficient in some applications, recent applications of the PSWFs require the knowledge of a non-asymptotic behaviour of the $\lambda_n(c)$'s
within the previous regions of the spectrum. This issue is rarely explored in the literature.
Recently, Israel and Osipov in \cite{Arie, Os} provided some non-asymptotic
behaviours of the spectrum in a neighbourhood of the plunge region.  More precisely, in    
\cite{Os}, the author has shown that 
$$
|\Lambda_{\varepsilon} | \leq \frac{2c}{\pi} + K (\log c)^2,
$$
where $K$ is a constant independent of $c$ and $\varepsilon.$  Recently, in \cite{Arie}, 
an improved non-asymptotic estimate of the eigenvalues $\lambda_n(c)$ in the gauge
$$
\Lambda'_{\varepsilon} = \{ k \in \mathbb N : \varepsilon< \lambda_k(c) \leq 1-\varepsilon\},\qquad \varepsilon\in (0,1/2),
$$
has been recently given. More precisely, it is shown in \cite{Arie} that for each $\eta\in (0, 1/2],$
 $$ |\Lambda'_{\varepsilon} | \leq \frac{2c}{\pi} + K_{\eta} \left(\log(\log(c)\cdot \varepsilon^{-1})\right)^{1+\eta}\cdot \log( c \cdot \varepsilon^{-1}),$$ 
for some constant  $K_{\eta},$ depending only on $\eta.$ The present work is a new contribution in the direction of the non-asymptotic  behaviours
 of the spectrum of $\qq_c$ within its  three distinct regions and not only in the neighbourhood of the plunge region.

\smallskip

Let us now describe the results of this paper. The first part of the paper consists in obtaining new and rather sharp estimates of the
decay rate of the $\lambda_n(c)$'s. Our main result in this direction may be summarized as follows:

\medskip

\noindent{\bf Theorem.} {\em Under the previous notation, we have: \\

--- For  any $c >0$ and any  $\dst 0\leq n<\frac{2c}{2.7},$
\begin{equation}
\label{eq:thmain1}
1-\frac{7}{\sqrt{c}}\frac{(2 c)^n}{n!}e^{- c}\leq \lambda_n(c)<1.
\end{equation}

--- For any  $c\geq 22$  and $\eta=0.069,$ we have, for $\dst\frac{2c}{\pi}+\log c+6\leq n\leq c$,
\begin{equation}
\label{eq:thmain2}
\lambda_n(c)\leq\frac{1}{2}\exp\left(-\frac{\eta\left(n-\frac{2c}{\pi}\right)}{\log c+5}\right).
\end{equation}

--- For any $c >0$ and any ${\displaystyle n\geq \max\Big(\frac{e c}{2},2\Big),}$ 
\begin{equation}
\label{eq:thmain3}
\lambda_n(c)\leq\exp\left(-(2n+1)\log\frac{2}{ec}(n+1)\right).
\end{equation}
}

\medskip

The actual results are slightly more precise. The result \eqref{eq:thmain1} is obtained by exploiting the min-max
theorem tested on Hermite functions. The second form of the min-max theorem also allows to derive \eqref{eq:thmain3}.
A more precise estimate can be obtained at the price of much more involved computations by exploiting our previous work
\cite{Bonami-Karoui3}. This method can be pushed almost to the plunge region and leads to \eqref{eq:thmain2}.

\smallskip

In the second part of this paper, we exploit these bounds in two directions. First, for $\epsilon >0,$ we 
consider the best constant $\Gamma_2(n,\eps)$ in Remez type inequalities: for every polynomial $P$
of degree at most $n$,
$$
\int_{[-\eps/2,\eps/2]}|P(e^{it})|^2\,\mbox{d}t\geq\Gamma_2(n,\eps)\int_{[-\pi,\pi]}|P(e^{it})|^2\,\mbox{d}t.
$$
We show that $2\lambda_n(c)\geq \Gamma_2\bigl(n,2c/(n+1)\bigr)$ from which we deduce a new upper bound for $\Gamma_2$. 
Also,  we give a lower bound for the constant $A$ appearing in the Tur\`an-Nazarov concentration inequality that relates 
for $0\leq q\leq 2,$ the two quantities $\|P\|_{L^q(E)}$ and $\|P\|_{L^q(\TT)},$ where $P$ is a trigonometric polynomial over the torus
and $E$ is a measurable subset of $\TT.$

\smallskip

Finally, we use our lower bound of the $\lambda_n(c)$'s, given by \eqref{eq:thmain1}, and give an estimate 
of the hole probability $E_2(0,c),$ given by \eqref{eq:mehta}.

\medskip

The remaining of the paper is organized as follows. In the next section, we introduce the necessary notation and mathematical preliminaries that we will use. Section 3 is devoted to results that we obtain through the min-max principle while Section 4 is devoted
to the precise estimates of $\lambda_n(c)$ built on our previous work.
Finally Section 5 is devoted to the applications of those estimates.

\section{Mathematical Preliminaries}

\subsection{Notation}  

As precise constants are computed, it is important to be precise with the normalizations used in this paper. 
The Fourier transform is defined for $f\in L^1(\R)\cap L^2(\R)$ by
$$
\widehat{f}(\xi):=\ff[f](\xi):=\int_\R f(x) e^{-ix\xi}\,\mbox{d}x.
$$
Parseval's theorem then reads
$$
\int_\R|f(x)|^2\,\mbox{d}x=\frac{1}{2\pi}\int_\R|\widehat{f}(\xi)|^2\,\mbox{d}\xi
$$
and Fourier inversion when $\widehat{f}\in L^1(\R)$ reads
$$
f(x)=\frac{1}{2\pi}\int_\R \widehat{f}(\xi) e^{ix\xi}\,\mbox{d}\xi:=\ff^{-1}[\widehat{f}](x).
$$
The operators $\ff$ and $\ff^{-1}$ are then extended to $L^2(\R)$ in the usual way.
We then consider the time-band limiting operator $\qq_c$, $c>0$ as 
$$
\qq_c f=\mathbf{1}_{[-1,1]}\ff\left[\mathbf{1}_{[-c,c]}\ff^{-1}[\mathbf{1}_{[-1,1]}f]\right]
$$
when seen as an operator $L^2(\R)\to L^2(\R)$
or simply $\qq_c f=\ff^{-1}\bigl[\mathbf{1}_{[-c,c]}\ff[f]\bigr]$ when seen as an operator $L^2([-1,1])\to L^2([-1,1])$.
Note also that, applying Parseval's identity, $\norm{\qq_c f}_{L^2(-1,1)}\leq \norm{f}_{L^2(-1,1)}$. 
It is not hard to see that this operator is given as a Hilbert-Schmidt operator via a sinc-kernel
$$
\qq_c f(x)=\int_{-1}^1\frac{\sin c(y-x)}{\pi(y-x)}f(y)\,\mbox{d}y.
$$
We define the finite Fourier transform $\ff_c\,:L^2([-1,1])\to L^2([-1,1])$ as
$$
\ff_c[f](x)=\int_{-1}^1 f(y) e^{-icxy}\,\mbox{d}y=\ff[\mathbf{1}_{[-1,1]}f](cx)
$$
and then $\ff_c^*\,:L^2([-1,1])\to L^2([-1,1])$ is given by
$$
\ff_c^*[g](x)=\int_{-1}^1 g(y)e^{icxy}\,\mbox{d}y.
$$
A simple computation then shows that $\qq_c=\frac{2c}{\pi}\ff_c^*\ff_c$. This shows that $\qq_c$ is self-adjoint, positive and compact. 
Its eigenvalues $(\lambda_n(c))_{n\geq 0}$ are then arranged as follows,
$$
1\geq \lambda_0(c)>\lambda_1(c)>\cdots>\lambda_n(c)>\cdots
$$

In \cite{Slepian1}, D. Slepian and H. Pollack have pointed out the  crucial property that the 
operator $\qq_c$ commutes with a Sturm-Liouville operator $\mathcal L_c$,   defined on $C^2([-1,1])$ by
\begin{equation}\label{eq1.0}
\mathcal L_c(\psi)=-\frac{d}{d\, x}\left[(1-x^2)\frac{d\psi}{d\,x}\right]+c^2 x^2\psi.
\end{equation}
Consequently, both operators have the same infinite and countable set of eigenfunctions $\psi_{n,c}(\cdot),\,\, n\geq 0,$ called prolate spheroidal wave functions (PSWFs). The eigenvalues of the differential and integral operators $\mathcal L_c$ and $\qq_c$ are denoted by $\chi_n(c)$ and $\lambda_n(c),$ so that for any integer $n\geq 0,$ we have 
\begin{equation}\label{Eq1.0}
\mathcal L_c(\psi_{n,c})(x)=\chi_n(c) \psi_{n,c}(x),\qquad \qq_c (\psi_{n,c})(x)=\lambda_n(c) \psi_{n,c}(x),\quad x\in [-1,1].
\end{equation}
Note  that the eigenvalues $\chi_n(c)$ and $\lambda_n(c)$ are simple. Moreover, a straightforward application of the variational formulation of the eigenvalues of the self-adjoint operator $\mathcal L_c$ shows that 
\begin{equation}\label{Eqq1.0}
n(n+1)=\chi_n(0)\leq \chi_n(c) \leq n(n+1)+c^2,\qquad n\geq 0.
\end{equation}
For more details, see \cite{Slepian1}. Also, note that in the special case $c=0,$ the $\psi_{n,c}$ are reduced to the well-known  Legendre polynomials $P_n$.

Since $\lambda_n(c)$ is the $n+1$-th eigenvalue of $\qq_c$, the min-max theorem asserts that for $I=[-1,1],$ we have
\begin{equation}\label{eq:minmax1}
\lambda_{n}(c) =\min_{S_n} \max_{f\in S_n^{\perp}} \frac{\scal{ \mathcal Q_c f, f}_{L^2(I)}}{\|f\|^2_{L^2(I)}}=
\min_{S_n} \max_{f\in S_n^{\perp}} \frac{2c}{\pi}\frac{\scal{\mathcal F_c f, \mathcal F_c f}_{L^2(I)}}{\|f\|^2_{L^2(I)}},
\end{equation}
where $S_n$ is a set of $n$-dimensional subspaces of $L^2(I)$. On the other hand, for $f\in L^2(\R)$, 
\begin{eqnarray*}
\scal{\qq_c f,f}_{L^2(I)}&=&\scal{\ff\bigl[\mathbf{1}_{(-c,c)}\ff^{-1}[\mathbf{1}_{(-1,1)}f]\bigr],\mathbf{1}_{(-1,1)}f}_{L^2(\R)}\\
&=&\frac{1}{2\pi}\scal{\mathbf{1}_{(-c,c)}\ff^{-1}[\mathbf{1}_{(-1,1)}f],\ff^{-1}[\mathbf{1}_{(-1,1)}f]}_{L^2(\R)}\\
&=&\frac{\norm{\ff^{-1}[\mathbf{1}_{(-1,1)}f]}^2_{L^2(-c,c)}}{2\pi}.
\end{eqnarray*}
Thus, if we identify $S_m$ with the set of $m$-dimensional subspaces of the subspace of $L^2(\R)$ consisting
of functions with support in $[-1,1]$, then the min-max Theorem implies that
$$
\lambda_n(c)=\sup_{V\in S_{n+1}}\min_{f\in V\setminus\{0\}}\frac{\norm{\ff^{-1}[\mathbf{1}_{(-1,1)}f]}^2_{L^2(-c,c)}}{2\pi\norm{\mathbf{1}_{(-1,1)}f}_{L^2(\R)}^2}
=\sup_{V\in \mathcal{V}_{n+1}(1)}\min_{g\in V\setminus\{0\}}\frac{\norm{g}^2_{L^2(-c,c)}}{\norm{g}_{L^2(\R)}^2}
$$
where $\mathcal{V}_m(a)$ is the set of $m$-dimensional subspaces of the Paley-Wiener space of $a-$bandlimited functions
$$
\mathcal B_a:=\{g=\ff^{-1}[f]\,:\ f\in L^2(\R),\ \mathrm{Supp}\,f\subset[-a,a]\}.
$$
Let $g\in \bb_1$ and $g_c(x)=c^{1/2}g(cx)$. Then $\norm{g_c}_{L^2(\R)}=\norm{g}_{L^2(\R)}$,
$\norm{g_c}_{L^2(-1,1)}=\norm{g}_{L^2(-c,c)}$ and $g_c=\ff^{-1}[f_c]$ where $f_c(y)=c^{-1/2}f(y/c)\in L^2(\R)$
with support in $[-c,c]$, thus $g_c\in \bb_c$. We thus get
\begin{equation}\label{eq:minmax2}
\lambda_n(c)=\sup_{V\in\mathcal{V}_{n+1}(c)}\inf_{f\in V\setminus\{0\}}\frac{\|f\|^2_{L^2(-1, +1)}}{\|f\|^2_{L^2(\R)}},\quad n\geq 0.
\end{equation}

In particular, $\lambda_0(c)$ is the largest constant such that the inequality
$$
C\norm{f}_{L^2(\R)}^2\leq \norm{f}_{L^2(-1,1)}^2
$$
holds for some $c$-band limited function $f$.

A compacity argument shows that there exists $f_0\in\bb_c\setminus\{0\}$ such that
$\lambda_0(c)\norm{f_0}_{L^2(\R)}^2\leq \norm{f_0}_{L^2(-1,1)}^2$. But then $\lambda_0(c)=1$ would
imply that $f_0$ is supported in $[-1,1]$ which is 
impossible since $f_0\not=0$ is band-limited thus an entire function. As a consequence, $\lambda_0(c) < 1.$

\subsection{Previous estimates on $\lambda_n(c)$}

In the sequel, we denote by $\mathbf K$ and  $\mathbf E,$  the    elliptic  integrals of the first and second kind, given respectively, by 
\begin{equation}\label{elliptic_integrals}
\mathbf K(t)=\int_0^1 \frac{ \mathrm{d}s}{\sqrt{(1-s^2)(1-s^2 t^2)}}\qquad 
\mathbf E(t)=\int_0^1 \sqrt{\frac{1- s^2 t^2}{1-s^2}}\,\mathrm{d}s, \quad  0\leq s\leq 1.
   \end{equation} 
We first recall from the literature, some decay results of the 
sequence of the eigenvalues $(\lambda_n(c))_n.$ The first asymptotic behaviour of these eigenvalues has been given 
by Widom, see \cite{Widom},
\begin{equation}\label{Widom_asymptotic}
\lambda_n(c)\sim  \left ( \frac {e c} {4(n+\frac 12)} \right)^{2n+1}=\lambda_n^W(c).
\end{equation}
An important result that describes the asymptotic behaviour of the spectrum inside its slow evolution region and plunge region, is given 
by the following  Landau-Widom asymptotic eigenvalues counting formula. More precisely, for $\varepsilon \in (0,1/2)$, let 
$$
\Lambda_{\varepsilon} = \{ k \in \mathbb N : \lambda_k(c) \geq \varepsilon\},\qquad \varepsilon\in (0,1/2).
$$
Then, in \cite{Landau0}, the authors have shown that for $c\gg 1,$ we have  
\begin{equation}\label{Landau_Estimate}
|\Lambda_{\varepsilon} | = \frac{2c}{\pi} + \frac{1}{\pi^2} \log\left(\frac{1-\varepsilon}{\varepsilon}\right) \log(c)+o(\log c).
\end{equation}
On the other hand,Fuchs in \cite{Fuchs} has shown that in the slow evolution region of the spectrum, for a fixed integer
$n\geq 0,$ the eigenvalue $\lambda_n(c)$ converges exponentially to one, with respect to $c$. More precisely, Fuchs asymptotic formula states that 
\begin{equation}\label{Fuchs_asymptotic}
1-\lambda_n(c) \sim 4\sqrt{\pi} \frac{8^n}{n!} c^{n+1/2} e^{-2c},\qquad c\gg 1.
\end{equation}
 
Recently, in \cite{Bonami-Karoui3}, the authors have given a  precise explicit approximation formula for $\lambda_n(c),$ which is valid for $\frac{\pi n}{2}-c$ larger than some multiple of $\ln n$. This formula gives also the precise asymptotic super-exponential decay rate of the spectrum of $\mathcal Q_c.$
More precisely, let us denote by  $\Phi$ the inverse of the function $\Psi\,:t\mapsto \frac{t}{\E(t)}$, then this approximation formula is given by 
\begin{equation}\label{decay22}
\lambda_n(c) \sim \widetilde { \lambda_n}(c) =  \frac{1}{2} \exp\left(-\frac{\pi^2(n+\frac 12)}{2} \int_{\Phi\left(\frac{2c}{\pi (n+\frac 12)}\right)}^1 \frac{1}{t(\mathbf E(t))^2}\,\mathrm{d}t \right).
\end{equation}
As a consequence, from Corollary 3 of \cite{Bonami-Karoui3}, there exist three constants $\delta_1\geq 1, \delta_2, \delta_3, \geq 0$ such that, for $n\geq 3$ and $c\leq \frac{\pi n}2$,
\begin{equation}\label{best}
A(n, c)^{-1}\left(\frac{ec}{2(2n+1)}\right)^{2n+1}\leq  \lambda_n(c)\leq A(n,c)\left(\frac{ec}{2(2n+1)}\right)^{2n+1}.
\end{equation}
with 
$$
A(n, c)=\delta_1 n^{\delta_2}\left(\frac c{c+1}\right)^{-\delta_3}e^{+\frac {\pi^2}4 \frac{c^2}{n}}.
$$
Moreover from Proposition 4 of \cite{Jaming-Karoui-Spektor}, we have the following  lower decay rate of the $\lambda_{n}(c),$
\begin{equation}\label{lower_decay1}
\lambda_n(c)\geq7\left(1-\frac{2c}{n\pi}\right)^2\left(\frac{c}{7\pi n}\right)^{2n-1}, \dst n>\frac{2}{\pi}c.
\end{equation}
This lower decay rate is obtained by combining the Min-Max characterization of the $\lambda_n(c),$ and the Tur\`an-Nazarov
concentration inequality \cite{Nazarov}. 

\section{Min-Max technique and  non-asymptotic behaviour of the spectrum of the Sinc  kernel operator.}

In this section, we first give a simple proof of a fairly tight upper  bound of  the super-exponential decay rate of the eigenvalues of the Sinc-kernel operator.  This simple proof is based on the use of the Min-Max characterisation of the eigenvalues of a self-adjoint compact operator. Then, this last technique is used to provide us with a lower bound of the
eigenvalue $\lambda_n(c),$ with $n\geq 0$ not too large.  We should mention that this Min-Max theorem based method has the advantage to be relatively simple to apply. Nonetheless, when used to estimate the super-exponential decay rate of the $\lambda_n(c),$ this method  has the drawback to be valid only sufficiently far from the plunge region around $n_c=\frac{2c}{\pi}.$ This is given by the following theorem.

\begin{theorem} Let $c>0$ be a real number,  then for any integer $n \geq \max\left(2,\frac{ e c}{2}\right),$ we have 
\begin{equation}\label{Eq3.1}
\lambda_{n}(c) \leq \exp\left[-(2n+1)\log
\left(\frac{2}{ec}(n+1)\right)\right].
\end{equation}
\end{theorem}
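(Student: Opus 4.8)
The plan is to use the second min-max characterization \eqref{eq:minmax2}, namely
$$
\lambda_n(c)=\sup_{V\in\mathcal{V}_{n+1}(c)}\inf_{f\in V\setminus\{0\}}\frac{\|f\|^2_{L^2(-1,1)}}{\|f\|^2_{L^2(\R)}},
$$
and to bound the supremum from above. Rather than choosing a clever subspace $V$, I would bound $\inf_{f\in V}\frac{\|f\|^2_{L^2(-1,1)}}{\|f\|^2_{L^2(\R)}}$ for an \emph{arbitrary} $(n+1)$-dimensional subspace $V\subset\bb_c$. The idea is that an $(n+1)$-dimensional space of $c$-bandlimited functions cannot be too concentrated on $[-1,1]$: if $\|f\|_{L^2(-1,1)}^2\geq\lambda\|f\|_{L^2(\R)}^2$ for \emph{every} $f$ in $V$, then $V$ behaves like a subspace that is ``$\lambda$-captured'' by the operator $\qq_c$, which forces $\lambda\le\lambda_n(c)$ — but that is circular. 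So instead I would proceed more concretely via a trace/dimension-counting estimate combined with pointwise control of bandlimited functions.

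The cleanest route I would take: for any $f\in\bb_c$, the Bernstein-type / reproducing-kernel inequality gives pointwise control of $f$ on $[-1,1]$ in terms of $\|f\|_{L^2(\R)}$, since $\bb_c$ has reproducing kernel $\frac{\sin c(x-y)}{\pi(x-y)}$ with $\|K_x\|^2=\frac{c}{\pi}$. Thus $\|f\|_{L^\infty(-1,1)}^2\le \frac{c}{\pi}\|f\|_{L^2(\R)}^2$, hence $\|f\|_{L^2(-1,1)}^2\le \frac{2c}{\pi}\|f\|_{L^2(\R)}^2$, which only recovers $\lambda_n(c)\le 1$ for $c$ large. To get the super-exponential bound I would iterate this using derivatives: a function in $V$ that is small together with many of its derivatives at a point must be globally small on $[-1,1]$. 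Concretely, pick $V\in\mathcal{V}_{n+1}(c)$ achieving (nearly) the supremum; by linear algebra there exists $f\in V\setminus\{0\}$ with $f(0)=f'(0)=\cdots=f^{(n-1)}(0)=0$ (that is $n$ linear constraints on an $(n+1)$-dimensional space), normalized so $\|f\|_{L^2(\R)}=1$. For such $f$, on $[-1,1]$ one has $|f(x)|=\left|\int_0^x\frac{(x-t)^{n-1}}{(n-1)!}f^{(n)}(t)\,\mathrm dt\right|\le\frac{1}{n!}\sup_{|t|\le1}|f^{(n)}(t)|$. Now bound $\|f^{(n)}\|_{L^\infty(-1,1)}$ by the Bernstein-type inequality for $\bb_c$: since $\widehat{f^{(n)}}(\xi)=(i\xi)^n\widehat f(\xi)$ is supported in $[-c,c]$, one gets $\|f^{(n)}\|_{L^\infty(\R)}^2\le\frac{c^{2n+1}}{\pi}\|f\|_{L^2(\R)}^2=\frac{c^{2n+1}}{\pi}$ (the reproducing kernel for the $n$-th derivative functional has squared norm $\int_{-c}^c \xi^{2n}\frac{\mathrm d\xi}{2\pi}=\frac{c^{2n+1}}{\pi(2n+1)}$, so in fact $\|f^{(n)}\|_{L^\infty(\R)}^2\le\frac{c^{2n+1}}{\pi(2n+1)}$). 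Combining, $\|f\|_{L^2(-1,1)}^2\le 2\|f\|_{L^\infty(-1,1)}^2\le \frac{2}{(n!)^2}\cdot\frac{c^{2n+1}}{\pi(2n+1)}$, and since $f\in V$ this upper-bounds $\inf_{g\in V}\frac{\|g\|^2_{L^2(-1,1)}}{\|g\|^2_{L^2(\R)}}$ and hence, taking the sup over $V$, bounds $\lambda_n(c)$.

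Then the remaining work is purely elementary: show that
$$
\frac{2}{\pi(2n+1)(n!)^2}\,c^{2n+1}\le \exp\left[-(2n+1)\log\left(\frac{2}{ec}(n+1)\right)\right]
$$
for $n\ge\max(2,ec/2)$, using Stirling in the form $n!\ge\sqrt{2\pi n}\,(n/e)^n\ge (n/e)^n\cdot$(something), so that $(n!)^2\gtrsim (n/e)^{2n}$ and $\frac{c^{2n+1}}{(n!)^2}\lesssim c\cdot\left(\frac{ec}{\ldots}\right)^{2n}\cdot$ — matching the claimed $\left(\frac{ec}{2(n+1)}\right)^{2n+1}$ after absorbing the polynomial prefactors $\frac{2}{\pi(2n+1)}$ and the discrepancy between $n$ and $n+1$ into the constraint $n\ge ec/2$ (which makes $\frac{ec}{2(n+1)}<1$, so raising it to the $(2n+1)$ power beats any fixed polynomial factor once $n\ge2$). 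The main obstacle — really the only subtle point — is getting the constants to line up cleanly: one must be careful whether to use $\|f\|_{L^2(-1,1)}^2\le 2\|f\|_{L^\infty(-1,1)}^2$ or a sharper Taylor-remainder estimate, and whether the reproducing-kernel bound on $f^{(n)}$ gives the factor $\frac{1}{2n+1}$ or not; also one should double-check that the linear-algebra step (imposing $n$ vanishing derivative conditions) really leaves a nonzero $f$, which it does since $\dim V=n+1>n$. Everything else is routine estimation with Stirling's formula and the hypothesis $n\ge ec/2$.
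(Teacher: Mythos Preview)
Your argument is correct up to and including the bound
$$
\lambda_n(c)\leq \frac{2}{\pi(2n+1)(n!)^2}\,c^{2n+1},
$$
but the final step --- ``matching the claimed $\left(\frac{ec}{2(n+1)}\right)^{2n+1}$ after absorbing the polynomial prefactors'' --- is where the argument breaks down. By Stirling, $(n!)^2\sim 2\pi n (n/e)^{2n}$, so your bound is of order $\bigl(\frac{ec}{n}\bigr)^{2n}$ up to a polynomial in $n$, whereas the target is $\bigl(\frac{ec}{2(n+1)}\bigr)^{2n+1}$. The ratio of the two is of order $2^{2n+1}$, which is \emph{exponential} in $n$, not polynomial, and cannot be absorbed. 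Concretely, at the edge $n=\lceil ec/2\rceil$ of the claimed range the theorem asserts $\lambda_n(c)\le\bigl(\frac{n}{n+1}\bigr)^{2n+1}\approx e^{-2}$, while your bound evaluates to roughly $4^n/(e\pi^2)$, which is not even below $1$ for $n\geq 3$.

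It is worth noting that your choice of constraints is, in disguise, exactly the paper's choice: imposing $f(0)=\cdots=f^{(n-1)}(0)=0$ on $f\in\bb_c$ means, on the Fourier side, that $\widehat f$ is orthogonal on $[-c,c]$ to all polynomials of degree $<n$, i.e.\ to $P_0,\ldots,P_{n-1}$. So your codimension-$n$ subspace $W$ and the paper's $S_n=\vect\{\widetilde P_0,\ldots,\widetilde P_{n-1}\}$ are the same test space seen through the two dual min-max formulations \eqref{eq:minmax1}--\eqref{eq:minmax2}. The entire discrepancy is in the estimate: you control $f$ on $[-1,1]$ by the Taylor remainder and the crude Bernstein bound $\|f^{(n)}\|_\infty^2\le \frac{c^{2n+1}}{\pi(2n+1)}$, whereas the paper uses the exact identity $\mathcal F_c P_k(x)=i^k\sqrt{2\pi/(cx)}\,J_{k+1/2}(cx)$ together with $|J_\alpha(z)|\le (z/2)^\alpha/\Gamma(\alpha+1)$. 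The $z/2$ in the Bessel bound is precisely the missing factor of $2$ per degree; it reflects the extra cancellation in the Legendre--Bessel formula that a plain Taylor remainder cannot detect. Your method does yield a valid super-exponential bound, but with $\frac{2}{ec}(n+1)$ replaced by $\frac{1}{ec}(n+1)$ in the exponent and the range $n\ge ec/2$ replaced by roughly $n\ge ec$.
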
 

\begin{proof} Recall from \eqref{eq:minmax1} that
$$
\lambda_{n}(c) =\min_{S_n} \max_{f\in S_n^{\perp}} \frac{\scal{ \mathcal Q_c f, f}}{\|f\|^2_{L^2(I)}}=
\min_{S_n} \max_{f\in S_n^{\perp}} \frac{2c}{\pi}\frac{\scal{\mathcal F_c f, \mathcal F_c f}}{\|f\|^2_{L^2(I)}},
$$
where the $S_n$ are $n-$dimensional subspaces of $L^2(I)$.

Recall that the Legendre polynomials normalized by $P_n(1)=1$ are defined by the Rodrigues formula
$$
P_n(x)=\frac{1}{2^nn!}\frac{\mathrm{d^n}}{\mathrm{d}x^n} (1-x^2)^n
$$
and that, if we renormalize them by $\widetilde P_k= \sqrt{k+1/2} P_k$,
then 
$\widetilde P_n, \, n\geq 0$ form an orthonormal basis of $L^2(I)$.
We will take the following special choice of $S_n$ in \eqref{eq:minmax1}:
$$
S_n =\mbox{Span}\{\widetilde P_0,\ldots,\widetilde P_{n-1}\}.
$$
Hence, if $f\in S_n^{\perp},$ then ${\displaystyle f=\sum_{k=n}^{\infty} a_k \widetilde P_k.}$ 
We may assume that $\|f\|^2_{L^2(I)}=1,$ so that ${\displaystyle \sum_{k=n}^{\infty} a_k^2 =1.}$ 
It follows that
$$
\mathcal{F}_c f=\sum_{k=n}^{\infty} \sqrt{k+1/2}a_k \mathcal{F}_cP_k(x)
$$
and
\begin{eqnarray*}
\norm{\mathcal{F}_c f}&=&\sum_{k=n}^{\infty} \sqrt{k+1/2}|a_k|\norm{\mathcal{F}_cP_k}\leq
\left(\sum_{k=n}^{\infty}|a_k|^2\right)^{1/2}\left(\sum_{k=n}^{\infty}(k+1/2)\norm{\mathcal{F}_cP_k}^2\right)^{1/2}\\
&\leq&\left(\sum_{k=n}^{\infty}(k+1/2)\norm{\mathcal{F}_cP_k}^2\right)^{1/2}.
\end{eqnarray*}
Combining this with \eqref{eq:minmax1}, we obtain
\begin{equation}
\label{eq:upbnd}
\lambda_{n}(c)\leq\frac{2c}{\pi}\sum_{k=n}^{\infty}(k+1/2)  \norm{\mathcal{F}_cP_k}^2.
\end{equation}

On the other hand, it is known that, see for example \cite{NIST}
\begin{equation}\label{Eq3.4}
\mathcal{F}_c P_k(x)=\int_{-1}^1 e^{icxy} P_k(y)\, dy = i^k \sqrt{\frac{2\pi}{cx}} J_{k+\frac{1}{2}} (cx),\quad x\in I,
\end{equation}
where $J_{\alpha}$ is the Bessel function of the first type and order $\alpha>-1.$ 

Further, the Bessel function $J_{\alpha}$ has the following fast decay with respect to the parameter $\alpha,$
$$
|J_{\alpha}(z)|\leq \frac{\left|\frac{z}{2}\right|^{\alpha}}{\Gamma(\alpha+1)},
$$
where $\Gamma(\cdot)$ is the Gamma function ({\it see e.g.} \cite{NIST}).
Combining this with the classical estimate of the Gamma function 
$$
\Gamma(x+1)\geq\sqrt{2e} \left(\frac{x+\frac{1}{2}}{e}\right)^{x+\frac{1}{2}}
$$
we obtain
$$
|J_{k+1/2}(cx)|^2\leq \frac{1}{2(k+1)}\left(\frac{ec}{2(k+1)}\right)^{2k+1} |x|^{2k+1}.
$$
From this, we deduce that
$$
\norm{\mathcal{F}_c P_k}^2\leq \frac{\pi}{c(k+1)}\left(\frac{ec}{2(k+1)}\right)^{2k+1}\int_{-1}^1 |x|^{2k}\,\mathrm{d}x
=\frac{2\pi}{c(k+1)(2k+1)}\left(\frac{ec}{2(k+1)}\right)^{2k+1}.
$$
Injecting this into \eqref{eq:upbnd}, we obtain
$$
\lambda_n(c)\leq
2\sum_{k=n}^{\infty}\frac{1}{(k+1)}\left(\frac{ec}{2(k+1)}\right)^{2k+1}
\leq \frac{2}{(n+1)}\sum_{k=n}^{\infty}\left(\frac{ec}{2(k+1)}\right)^{2k+1}.$$
We recall that $ec\leq n$ so that for $k\geq n$,
$$
\left(\frac{ec}{2(k+1)}\right)^{2k+1}\leq\left(\frac{ec}{2(n+1)}\right)^{2n+1}\left(\frac{n}{2(n+1)}\right)^{2(k-n)}.
$$
So we have
$$
\lambda_n(c)\leq \frac{2}{n+1}\left(\frac{ec}{2(n+1)}\right)^{2n+1}\sum_{k=n}^{\infty}\left(\frac{n}{2(n+1)}\right)^{2(k-n)}
\leq \left(\frac{ec}{2(n+1)}\right)^{2n+1}
$$
as claimed.
\end{proof}

In the remaining of this section, we use a second version of the Min-Max theorem to 
get some fairly precise lower bound for the first eigenvalues $\lambda_0(c),\ldots,\lambda_n(c)$ when
$n\leq\dst\frac{c}{2.7}$.

\begin{theorem} For $n$ an integer such that $0\leq n\leq\dst\frac{c}{2.7}$, 
we have
\begin{equation}
\label{lower_bound}
\lambda_n(c)\geq 1-\frac{7}{\sqrt{c}}\frac{(2 c)^n}{n!}e^{- c}.
\end{equation}
\end{theorem}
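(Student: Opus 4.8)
The plan is to use the second version of the min-max theorem, namely \eqref{eq:minmax2}, which states
$$
\lambda_n(c)=\sup_{V\in\mathcal{V}_{n+1}(c)}\inf_{f\in V\setminus\{0\}}\frac{\|f\|^2_{L^2(-1,1)}}{\|f\|^2_{L^2(\R)}}.
$$
It therefore suffices to exhibit a single $(n+1)$-dimensional subspace $V$ of the Paley--Wiener space $\bb_c$ for which every nonzero $f\in V$ has most of its $L^2(\R)$-mass inside $[-1,1]$, with a deficit controlled by $\frac{7}{\sqrt c}\frac{(2c)^n}{n!}e^{-c}$. The natural candidate, mirroring the proof of \eqref{eq:thmain1} in the introduction, is the span of the first $n+1$ rescaled Hermite functions: writing $h_k$ for the $L^2(\R)$-normalized Hermite functions (eigenfunctions of the Fourier transform), the functions $h_k(\cdot)$ are not band-limited, so instead one takes $V=\vect\{g_0,\ldots,g_n\}$ where $g_k=\ff^{-1}[\mathbf 1_{[-c,c]}h_k]$ (or a suitably dilated version so that the spectral support is exactly $[-c,c]$). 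Since the Hermite functions are very well concentrated in frequency in an interval of size $\sim\sqrt{k}$, for $k\le n\lesssim c/2.7$ the truncation $\mathbf 1_{[-c,c]}$ removes only an exponentially small tail, so $g_k\approx h_k$.

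The key steps, in order, are the following. First, reduce to estimating $\inf_{f\in V}\|f\|^2_{L^2(-1,1)}/\|f\|^2_{L^2(\R)}$ from below by $1-\sup_{f\in V}\|f\|^2_{L^2(\R\setminus[-1,1])}/\|f\|^2_{L^2(\R)}$. Second, for $f=\sum_{k=0}^n a_k g_k$ with $\sum|a_k|^2=1$, bound $\|f\|^2_{L^2(\R\setminus[-1,1])}$; by Cauchy--Schwarz on the $n+1$ terms, or more efficiently by using that the $h_k$ are orthonormal and the $g_k$ are close to them, this reduces to controlling $\|g_k\|^2_{L^2(\R\setminus[-1,1])}$ for each individual $k\le n$, which in turn (since $g_k\approx h_k$) reduces to the tail bound $\int_{|x|\ge1}|h_k(x)|^2\,dx$ after the appropriate dilation is inserted. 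Third, estimate this Hermite tail: the relevant quantity is of the form $\int_{|x|\ge c}|h_k(x)|^2\,dx$ (before rescaling the spatial variable back), and using the classical pointwise bounds on Hermite functions in the "forbidden region" $|x|>\sqrt{2k+1}$ — where $|h_k(x)|$ decays like a Gaussian — one gets a bound of the shape $\frac{C}{\sqrt c}\frac{(2c)^k}{k!}e^{-c}$ (the factor $\frac{(2c)^k}{k!}e^{-c}$ being exactly a Poisson-type term, consistent with the $e^{-2c}$ two-sided tail in Fuchs's formula \eqref{Fuchs_asymptotic} when $k=0$, noting the $c$ here plays the role of $2c$ there because of the dilation). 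Fourth, sum over $k\le n$: since $\frac{(2c)^k}{k!}$ is increasing in $k$ for $k\le 2c$, and $n\le c/2.7<2c$, the sum over $k=0,\ldots,n$ is dominated by its last term up to a geometric factor absorbed into the constant $7$, giving $\sup_{f\in V}\|f\|^2_{L^2(\R\setminus[-1,1])}\le\frac{7}{\sqrt c}\frac{(2c)^n}{n!}e^{-c}$.

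The main obstacle will be Step 3: getting a clean, fully explicit Hermite tail estimate with the correct constant and the correct exponent. One must choose the dilation parameter carefully (the relation between "$c$-bandlimited" and the Hermite scale $\sqrt{2k+1}$ forces a specific rescaling, and the constraint $n\le c/2.7$ must be exactly what guarantees $\sqrt{2n+1}$ stays comfortably below the turning point so that the Gaussian-decay regime applies with room to spare), and one must handle the error incurred by replacing $g_k$ by $h_k$ — i.e. bound $\|h_k-g_k\|_{L^2}=\|\mathbf 1_{\R\setminus[-c,c]}h_k\|_{L^2}$, which is itself another Hermite tail of the same type and is in fact what produces the whole estimate, since $\|g_k\|_{L^2(\R\setminus[-1,1])}^2\le 2\|h_k\|_{L^2(\R\setminus[-1,1])}^2+2\|h_k-g_k\|_{L^2}^2$ and both terms are controlled by tails of Hermite functions. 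A secondary technical point is making the constant come out to exactly $7$ rather than merely $O(1)$; this will require using sharp rather than crude forms of the Hermite asymptotics (e.g. the Plancherel--Rotach estimates or an explicit integral bound), and it is plausible the authors instead invoke a direct computation via the generating function $\sum_k \frac{t^k}{k!}H_k(x)=e^{2xt-t^2}$ to evaluate $\sum_{k}\frac{(2c)^k}{k!}\|h_k\|^2_{L^2(\R\setminus[-1,1])}$ in closed form, which would bypass the case-by-case tail bounds entirely and explain the precise shape of the right-hand side.
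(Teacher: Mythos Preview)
Your plan is essentially the paper's proof: min-max \eqref{eq:minmax2} tested on $V=\vect\{\Pi_c\varphi_0^{(c)},\ldots,\Pi_c\varphi_n^{(c)}\}$, then control of both the spatial tail of $F=\sum\gamma_k\varphi_k^{(c)}$ outside $[-1,1]$ and the frequency tail $\|\widetilde\Pi_cF\|$, summed over $k$ via the geometric bound $\sum_{k\le n}(2c)^k/k!\le 1.23\,(2c)^n/n!$ valid for $n\le c/2.7$.

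Two clarifications. First, the correct dilation is by $\sqrt{c}$, i.e.\ $\varphi_k^{(c)}(t)=c^{1/4}\varphi_k(\sqrt{c}\,t)$; this is what makes the Hermite functions \emph{symmetrically} concentrated in space and frequency, so both tails reduce to $\int_{|t|\ge\sqrt{c}}\varphi_k(t)^2\,\mathrm{d}t$, not $\int_{|t|\ge c}$ as you wrote (the latter would give $e^{-c^2}$, not $e^{-c}$). Your stated bound $\frac{C}{\sqrt c}\frac{(2c)^k}{k!}e^{-c}$ is the correct one for the $\sqrt{c}$-tail, so you would catch this once you compute. Second, the paper does not use Plancherel--Rotach or the generating function; it uses the elementary pointwise bound $|H_k(x)|\le 2^k|x|^k$ for $|x|\ge\sqrt{2k-2}$ (proved by a one-line induction from $H_k'=2kH_{k-1}$), combined with $\int_a^\infty t^\alpha e^{-t^2}\,\mathrm{d}t\le a^{\alpha-1}e^{-a^2}$. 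This gives directly $\int_{|t|\ge\sqrt{c}}\varphi_k^2\le\frac{2}{\sqrt{\pi c}}\frac{(2c)^k}{k!}e^{-c}$, and the constant $7$ falls out of $5\times 2.46/\sqrt\pi$ after the decomposition $\|f\|_{L^2(-1,1)}^2\ge 1-3\|\widetilde\Pi_cF\|^2-2\|F\|_{L^2(\R\setminus(-1,1))}^2$.
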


\begin{proof} 
In order to prove the theorem, we need some notation. Let $\Pi_c$ be the projection 
from $L^2(\R)$ on $\bb_c$ {\it i.e.} $\Pi_c f=\ff^{-1}\bigl[\mathbf{1}_{(-c,c)}\ff [f]\bigr]$, 
and $\widetilde{\Pi}_c=I-\Pi_c$ the projection on the orthogonal of $\bb_c$.
Let $H_n$ be the $n$-th Hermite polynomial given by
$$
H_n(t)=(-1)^ne^{t^2}\frac{\mathrm{d}^n}{\mathrm{d}t^n}\bigl(e^{-t^2}\bigr)
$$
and $\ffi_n(t)=\alpha_nH_n(t)e^{-t^2/2}$ with $\alpha_n=\dst\frac{1}{\pi^{1/4} 2^{n/2}\sqrt{n!}}$. 
It is then well known that $(\ffi_n)_{n\in\NN}$ is an orthonormal basis of $L^2(\R)$ of eigenfunctions of the normalized Fourier transform,
$\frac{1}{\sqrt{2\pi}}\ff[\ffi_n]=i^{-n}\ffi_n$.
Further, we check the following probably known result
\begin{equation}
\label{Hermite_bound}
|H_n(x)|\leq 2^n |x|^n,\quad\forall\, |x|\geq \sqrt{2n-2}.
\end{equation}
From the parity of the Hermite polynomials, it suffices to check the previous inequality for $x\geq \sqrt{2n-2}.$ 
It is well known that if $z_i$ denotes the largest zero of the Hermite polynomial $H_i(x),$ then
$$
\sqrt{2n-2} \geq z_n > z_{n-1} >\cdots >z_1=0.
$$
We prove by induction that $|H_n(x)|\leq 2^n |x|^n$ for $x>z_n$. This is valid for $H_1(t)=2t$. Then, assuming that it is valid for $k-1$ and since $H_k'(x)= 2 k H_{k-1}(x),$ the inequality for $H_k$ follows immediately from the fact that 
$\dst H_k(x)=2k\int_{z_k}^x H'_{k-1}(t) dt.$

Hence, by using \eqref{Hermite_bound}, one gets
\begin{equation}\label{lem:estgaus}
|\ffi_n(t)|\leq\frac{2^{n/2}}{\pi^{1/4} \sqrt{n!}}|t|^ne^{-t^2/2},\quad \forall\, |t| \geq \sqrt{2n-2}.
\end{equation}
Next, we note  that for  $\alpha>1$ and $a>\sqrt{\alpha-1}$, we have
\begin{equation}
\label{eq:estgaus}
\int_a^{+\infty} t^\alpha e^{-t^2}\,\mathrm{d}t\leq a^{\alpha-1}e^{-a^2}.
\end{equation}
In fact, under the previous conditions on $a$ and $\alpha,$ the function
$t\rightarrow t^{\alpha-1} e^{-t^2/2}$ is decreasing on $(a,+\infty)$ and consequently 
$$ 
\int_a^{+\infty} t^\alpha e^{-t^2}\,\mathrm{d}t\leq a^{\alpha-1}e^{-a^2/2}\int_a^{+\infty} t e^{-t^2/2}\,\mathrm{d}t=a^{\alpha-1}e^{-a^2}.$$
Applying  \eqref{lem:estgaus}, we deduce that for $k\leq \dst\frac{a^2-1}{2}$,
\begin{equation}
\label{eq:estffin0}
\int_{|t|\geq a} \ffi_k^2(t)\,\mathrm{d}t
\leq \frac{2^{k+1}}{\pi^{1/2} k!}\int_a^{+\infty}t^{2 k}e^{-t^2}\,\mathrm{d}t
\leq \frac{2^{k+1}}{\pi^{1/2} k!}a^{2k-1}e^{-a^2}.
\end{equation}
Next, using the fact that $\int_{\R}\ffi_k^2(t)\,\mathrm{d}t=1$, we get
\begin{equation}
\label{eq:estffin1}
\int_{|t|\leq a}\ffi_k(t)^2\,\mathrm{d}t\geq 1-\frac{2}{\pi^{1/2} a}\frac{(2a^2)^k}{k!}e^{-a^2}.
\end{equation}

Further, we define $\ffi_n^{(c)}(t)=c^{1/4}\ffi_n(\sqrt{c}t)$ and note that $(\ffi_n^{(c)})_{n\in\NN}$ is still
an orthonormal basis of $L^2(\R)$. Let $V_n(c)=\vect\{\Pi_c\ffi_0^{(c)},\ldots,\Pi_c\ffi_n^{(c)}\}$.
From \eqref{eq:minmax2} we know that, 
\begin{eqnarray}
\lambda_n(c)&\geq& \inf\left\{\frac{\norm{f}_{L^2(-1,1)}^2}{\norm{f}_{L^2(\R)}^2}\,: f\in V_n(c)\right\}\nonumber\\
&=&\inf\left\{\frac{\norm{f}_{L^2(-1,1)}^2}{\norm{f}_{L^2(\R)}^2}\,: f=\sum_{j=0}^n\gamma_j\Pi_c\ffi_j^{(c)}\,,\ \sum_{j=0}^n|\gamma_j|^2=1\right\}.
\label{eq:lambda0}
\end{eqnarray}
Now, take a sequence $(\gamma_j)_{j=0,\ldots,n}$ with $\dst\sum_{j=0}^n|\gamma_j|^2=1$.
Define $F=\dst\sum_{k=0}^n\gamma_k\ffi_k^{(c)}$ and $f=\Pi_cF$.

As $\Pi_c$ is an orthonormal projection on $\bb_c,$ which is a closed subspace of $L^2(\R),$ then we have 
$$
\norm{f}_{L^2(\R)}\leq \norm{F}_{L^2(\R)}=\sum_{k=0}^n|\gamma_j|^2=1.
$$
and
$$
\norm{F}_{L^2(\R)}^2 = \norm{F-\widetilde{\Pi}_cF}_{L^2(\R)}^2+\norm{\widetilde{\Pi}_cF}_{L^2(\R)}^2,\quad \widetilde{\Pi}_c=I-\Pi_c.
$$
Consequently, we have
\begin{eqnarray}
\norm{f}_{L^2(-1,1)}^2&=&\norm{\Pi_c F}_{L^2(-1,1)}^2=\norm{F-\widetilde{\Pi}_cF}_{L^2(-1,1)}^2\nonumber\\
&\geq& \norm{F-\widetilde{\Pi}_cF}_{L^2(\R)}^2-\norm{F-\widetilde{\Pi}_cF}_{L^2\bigl(\R\setminus (-1,1)\bigr)}^2\nonumber\\
&\geq&\norm{F}_{L^2(\R)}^2-\norm{\widetilde{\Pi}_cF}_{L^2(\R)}^2-\norm{F-\widetilde{\Pi}_cF}_{L^2\bigl(\R\setminus (-1,1)\bigr)}^2\nonumber\\
&\geq & 1-3\norm{\widetilde{\Pi}_cF}_{L^2(\R)}^2-2\norm{F}_{L^2\bigl(\R\setminus(-1,1)\bigr)}^2. \label{eq:up}
\end{eqnarray}

We now estimate the error terms. They will be obtained through the same computation, which we do first for 
$\norm{F}_{L^2\bigl(\R\setminus(-1,1)\bigr)}^2.$ We have the following inequalities.
\begin{eqnarray*}
\norm{F}_{L^2\bigl(\R\setminus(-1,1)\bigr)}^2&=&\int_{|x|>1}\abs{\sum_{j=0}^n\gamma_j\ffi_j^{(c)}(x)}^2\,\mathrm{d}x
= \int_{|x|> c^{1/2}}\abs{\sum_{j=0}^n\gamma_j\ffi_j(x)}^2\,\mathrm{d}x\\
&\leq&\sum_{k=0}^n\int_{|x|> c^{1/2}}|\ffi_k(t)|^2\,\mathrm{d}t.
\end{eqnarray*}
We have used the inequality of Cauchy-Schwarz and the fact that $\dst\sum_{k=0}^n|\gamma_k|^2=1$. By using \eqref{lem:estgaus}, we obtain the inequality

\begin{equation}
\norm{F}_{L^2\bigl(\R\setminus(-1,1)\bigr)}^2\leq \frac{2}{\pi^{1/2} c^{1/2}}\sum_{k=0}^n\frac{(2 c)^k}{k!}e^{- c}. 
\end{equation}
Now, when $n\leq \frac{ c}{2.7}$,  we get 
\begin{equation}\label{Sum_etimate}
\sum_{k=0}^n\frac{(2 c)^k}{k!}\leq \frac{(2 c)^n}{n!}\sum_{k=0}^\infty\left(\frac{n}{2c}\right)^k\leq  1.23
\frac{(2 c)^n}{n!}.
\end{equation}

Finally, we have
\begin{equation}
\label{eq:EstFL2-111}
\norm{F}_{L^2\bigl(\R\setminus(-1,1)\bigr)}^2\leq \frac{2.46}{\pi^{1/2} c^{1/2}}\frac{(2 c)^n}{n!}e^{- c}.
\end{equation}

Next, 
$$
\norm{\tilde\Pi_c F}_{L^2(\R)}^2=\frac 1{2\pi}\norm{\ff[\tilde\Pi_c F]}_{L^2(\R)}^2=\frac 1{2\pi}\int_{|\xi|\geq c}|\ff[F](\xi)|^2\,\mathrm{d}\xi.
$$
But
\begin{eqnarray*}
\frac 1{\sqrt{2\pi}}\ff[F](\xi)&=& \frac 1{\sqrt{2\pi}}\sum_{k=0}^n\gamma_k\ff[\ffi_k^{(c}](\xi)\\
&=&\sum_{k=0}^n c^{-1/4}\gamma_k \frac 1{\sqrt{2\pi}}\ff[\ffi_k](c^{-1/2}\xi)\\
&=&\sum_{k=0}^n c^{-1/4}i^{-k}\gamma_k \ffi_k(c^{-1/2}\xi).
\end{eqnarray*}
As we have done for the bound of $\norm{F}_{L^2\bigl(\R\setminus(-1,1)\bigr)}^2$, given by \eqref{eq:EstFL2-111}, one gets
\begin{eqnarray*}
\frac 1{{2\pi}}\norm{\ff[\tilde\Pi_c F]}_{L^2(\R)}^2&=&\int_{|\xi|\geq c}\abs{\sum_{k=0}^n c^{-1/4}i^{-k}\gamma_k \ffi_k(c^{-1/2}\xi)}^2\,\mathrm{d}\xi\\
&=&\int_{|\eta|\geq c^{1/2}}\abs{\sum_{k=0}^n i^{-k}\gamma_k \ffi_k(\eta)}^2\,\mathrm{d}\eta\\
&\leq&\sum_{k=0}^n\int_{|x|> c^{1/2}}|\ffi_k(t)|^2\,\mathrm{d}t \leq \frac{2.46 (2c)^ne^{-c}}{ \pi^{1/2}\sqrt{c} n!}.
\end{eqnarray*}
Parseval's equality thus implies that
\begin{equation}\label{eq:EstFL2-12}
\norm{\tilde\Pi_c F}_{L^2(\R)}^2=\frac{1}{2\pi}\norm{\ff[\tilde\Pi_c F]}_{L^2(\R)}^2
\leq 2.46\frac{ (2c)^ne^{-c}}{\pi^{1/2}\sqrt{c} n!}.
\end{equation}
By combining \eqref{eq:EstFL2-12}, \eqref{eq:EstFL2-111} and \eqref{eq:up}, we get
\begin{eqnarray*}
\norm{f}_{L^2(-1,1)}^2&\geq&1- \frac{12.3}{ \pi^{1/2} \sqrt{c}}\frac{(2 c)^n}{n!}e^{- c}\geq 1 - \frac{7}{\sqrt{c}} \frac{(2 c)^n}{n!}e^{- c}.
\end{eqnarray*}
Finally, as $\norm{f}_{L^2(\R)}\leq 1$, we get the desired inequality \eqref{lower_bound}.
\end{proof}

\begin{remark}
The previous  theorem gives a better estimate than Landau's Theorem only when the bound below, given by \eqref{lower_bound} is larger than $1/2$.
We will content ourselves to verify that the bound below given for $\lambda_n(c)$  has a positive sign.
This is given by the following lemma.
\end{remark}

\begin{lemma}
For any $c\geq 4$ and any integer ${\displaystyle 0\leq n\leq  \frac{c}{2.7}, }$ we have 
\begin{equation}\label{positivity}
 1-\frac{7}{\sqrt{c}}\frac{(2 c)^n}{n!}e^{- c}>0.
 \end{equation}
\end{lemma}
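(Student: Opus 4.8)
The plan is to recast \eqref{positivity} so that the roles of $n$ and of $c$ decouple, and then to conclude by a monotonicity in $c$. Multiplying \eqref{positivity} by $\sqrt c$, it is equivalent to $7\,\frac{(2c)^n}{n!}\,e^{-c}<\sqrt c$, i.e.\ to
$$
R_n(c):=\frac{7\cdot 2^n}{n!}\,c^{\,n-1/2}\,e^{-c}<1 .
$$
Furthermore the two hypotheses $c\geq 4$ and $0\le n\le c/2.7$ together amount exactly to $c\geq\max(4,2.7n)$. Hence it suffices to show that $R_n(c)<1$ for every integer $n\geq 0$ and every $c\geq\max(4,2.7n)$.

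The main point is that, for fixed $n$, the map $c\mapsto R_n(c)$ is non-increasing on $[\max(4,2.7n),+\infty)$. Indeed
$$
\frac{\mathrm d}{\mathrm d c}\log R_n(c)=\frac{n-1/2}{c}-1\leq 0\qquad\text{whenever }c\geq n-\tfrac12 ,
$$
and $\max(4,2.7n)\geq 2.7n\geq n-\tfrac12$ for all $n\geq 0$. So it is enough to check the single inequality $R_n\bigl(\max(4,2.7n)\bigr)<1$.

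For $n\in\{0,1\}$ we have $\max(4,2.7n)=4$, and a direct computation gives $R_0(4)=\frac72 e^{-4}$ and $R_1(4)=28\,e^{-4}$, both clearly less than $1$. For $n\geq 2$ we have $\max(4,2.7n)=2.7n$, and here I would feed Stirling's lower bound $n!\geq\sqrt{2\pi n}\,(n/e)^n$ (valid for $n\geq 1$) into $R_n(2.7n)=\frac{7\cdot 2^n}{n!}(2.7n)^{\,n-1/2}e^{-2.7n}$. Cancelling the $n^n$ factors, this collapses to
$$
R_n(2.7n)\leq\frac{7}{\sqrt{2\pi\cdot 2.7}}\cdot\frac{\bigl(5.4\,e^{-1.7}\bigr)^n}{n} .
$$
Since $\frac{7}{\sqrt{2\pi\cdot 2.7}}<2$ and $5.4<e^{1.7}$, the right-hand side is strictly less than $\frac{2}{n}\leq 1$ for every $n\geq 2$, which finishes the proof.

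The only step with any subtlety is the reduction of the $c$-range to one point: plugging Stirling directly into a generic $c$ (say, replacing $n$ by $c/2.7$ in an exponential bound) would leave a factor roughly $e^{-0.005\,c}$, which is useless for $c$ near $4$; it is the monotonicity in $c$ that disposes of the small-$c$ regime. The remaining computations are elementary, and one sees that $2.7$ is chosen just small enough that $5.4\,e^{-1.7}<1$, with hardly any margin since $\log 5.4\approx 1.686$.
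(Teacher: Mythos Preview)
Your proof is correct. The route differs from the paper's in the choice of variable for the monotonicity reduction. The paper takes logarithms, applies the Stirling-type bound $\Gamma(x+1)\geq\sqrt{2e}\bigl((x+\tfrac12)/e\bigr)^{x+1/2}$, and then, for fixed $c$, uses that $x\mapsto x\log\bigl((x+\tfrac12)/(2c)\bigr)-x$ is monotone on $[0,c]$ to reduce to the endpoint $x=c/2.7$. You instead fix $n$ and exploit the much more transparent monotonicity of $c\mapsto c^{\,n-1/2}e^{-c}$ for $c\geq n-\tfrac12$, which reduces everything to a single value $c=\max(4,2.7n)$; the small cases $n\in\{0,1\}$ are then checked directly and the range $n\geq 2$ is handled by Stirling. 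Both arguments ultimately hinge on the same boundary computation showing that $5.4<e^{1.7}$ (equivalently, that $2.7$ was chosen just large enough), but your reduction is cleaner: the derivative $\partial_c\log R_n(c)=(n-\tfrac12)/c-1$ is immediate, and treating $n=0,1$ separately avoids the mild awkwardness in the paper's proof where the ``lower-order'' term $\tfrac12\bigl(\log(c(n+\tfrac12))-1+\log(2e)-2\log 7\bigr)$ is tacitly dropped even though it is negative for small $n$.
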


\begin{proof}
We first rewrite the inequality \eqref{positivity} as follows
\begin{equation}\label{Ineq1}
n \log(2c)-c < \frac{1}{2} \log c - \log 7 +\log(n!).
\end{equation}
Moreover, since $n!= \Gamma(n+1)$ and for $x\geq 0,$ $\Gamma(x+1)\geq \sqrt{2e} \left(\frac{x+\frac{1}{2}}{e}\right)^{x+\frac{1}{2}},$
then the previous inequality is satisfied whenever 
$$ 
-c \leq n \log\left(\frac{n+1/2}{2c}\right)-n +\frac{1}{2}\bigl(\log(c(n+1/2))-1+\log(2 e)-2\log 7\bigr).
$$
Since $n\geq 0$ and $c\geq 4,$ then elementary computations show that this last inequality holds true whenever 
$$ 
n \log\left(\frac{n+1/2}{2c}\right)-n > -c.
$$
It is elementary to  verify this inequality for $n\leq \frac{c}{2.7},$ using monotonicity of 
${\displaystyle x\rightarrow x \log\left(\frac{x+1/2}{2c}\right)-x }$  on $[0,c].$ We have chosen the constant $2.7$ for this purpose.
\end{proof}

\section{The Sinc kernel operator: Non-Asymptotic decay rate of the spectrum.}

In this section, we need to introduce a new function related to the elliptic function $\E$ defined in \eqref{elliptic_integrals}:
let $\Phi(\cdot)$ the function defined on $[0,1]$ as the inverse of the function $t\mapsto \Psi(t)=\frac{t}{\E(t)}$. 
We will show that  the rapid decay starts shortly after  $\frac{2c}{\pi} + \log c .$
Also,  by using Landau's asymptotic formula for the number of eigenvalues $\lambda_n(c)$ that are greater than 
a threshold $0<\varepsilon < 1/2$, we check that  our asymptotic decay rate in the previous part of the plunge region is optimal 
with respect to the parameter $c.$
We give three different statements depending on how $n$ compares to $c$:
  
\begin{theorem}
\label{decay}
Let $c\geq 22$ be a real number, $\eta= 0.069$ and $\eta'=0.12$.

--- For any integer $n$ such that
$\frac{2c}{\pi} +\log c +6\leq n\leq  c,$ we have the inequality
\begin{equation}\label{decay1}
\lambda_n(c)\leq \frac{1}{2} \exp\left[-\frac{\eta(n-\frac{2c}{\pi})}{\log(c) +5}\right].
\end{equation}

--- Let $\frac{2}{\pi}\leq\delta\leq 1$. Under the assumption that 
$n\geq \max(\frac{2c}{\pi} +\log c +6, \frac {2 c}{\delta\pi}),$ 
we have the estimate
\begin{equation}\label{decay11}
\lambda_n(c)\leq \frac{1}{2} \Big(\Phi(\delta)\Big)^{0.069 n}.  
\end{equation}
In particular,
$$
\lambda_n(n)\leq \frac{1}{2} e^{-0.015 n}.
$$

--- For any any integer $n\geq c$ such that 
$\frac{\pi}2(n -\log n -9)\geq  c,$ we have 
\begin{equation}\label{ddecay1} 
\lambda_n(c) \leq \exp \left[-\eta' ( n -c)\right].
\end{equation}
\end{theorem}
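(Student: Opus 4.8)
The plan is to prove Theorem~\ref{decay} by reducing everything to the sharp two-sided bound \eqref{best} from \cite{Bonami-Karoui3} together with the explicit approximation \eqref{decay22}, and then to analyze the resulting elliptic-integral expression on the relevant ranges of $n$ relative to $c$. The common starting point is the representation
$$
\lambda_n(c)\leq \widetilde\lambda_n(c)\cdot A(n,c)/\widetilde\lambda_n(c)\cdot(\cdots),
$$
but more practically: write $\lambda_n(c)\leq \tfrac12\exp\bigl(-\tfrac{\pi^2(n+1/2)}{2}G(n,c)\bigr)$ (up to the controlled multiplicative error from \eqref{best}), where $G(n,c)=\int_{\Phi(\tau)}^1 \frac{dt}{t(\mathbf E(t))^2}$ and $\tau=\tau(n,c)=\frac{2c}{\pi(n+1/2)}$. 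The whole theorem becomes three statements about lower bounds for $G(n,c)$, i.e. about how $\Phi(\tau)$ sits inside $[0,1]$ and how fast the integrand blows up near $t=1$ (where $\mathbf E(1)=1$) and behaves near $t=0$ (where $\mathbf E(0)=\pi/2$).

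First I would set up the elementary calculus facts about $\Psi(t)=t/\mathbf E(t)$ and its inverse $\Phi$: $\Psi$ is increasing from $\Psi(0)=0$ to $\Psi(1)=1$, so $\Phi:[0,1]\to[0,1]$ is increasing, $\Phi(0)=0$, $\Phi(1)=1$; and I would record a quantitative lower bound for $1-\Phi(\tau)$ when $\tau$ is close to $1$, using $\mathbf K(t)\to\infty$ logarithmically as $t\to1$. For the first estimate \eqref{decay1}, the hypothesis $n\geq \frac{2c}{\pi}+\log c+6$ forces $\tau$ to be bounded away from $1$ by roughly $\frac{\pi(\log c+6)}{2c}$; one then lower-bounds $G(n,c)$ by $\int_{\Phi(\tau)}^1\frac{dt}{t(\mathbf E(t))^2}\geq \int_{\Phi(\tau)}^1 dt = 1-\Phi(\tau)$ (since $t\mathbf E(t)^2\leq 1$ on $[0,1]$), and converts the resulting lower bound into the claimed exponent $\frac{\eta(n-2c/\pi)}{\log c+5}$ after matching the constant $\eta=0.069$ to the worst case. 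The extra ``$\log c$'' in the denominator is exactly the price of the logarithmic divergence of $\mathbf K$, so the key computation is a clean estimate $1-\Phi(\tau)\gtrsim \frac{1-\tau}{\log\frac{1}{1-\tau}}$ type bound; pinning down that it yields the constant $0.069$ for all $c\geq 22$ is where the real work (and numerical verification) lies. For \eqref{decay11}, the point is purely monotonicity: if additionally $n\geq \frac{2c}{\delta\pi}$ then $\tau\leq\delta$, hence $\Phi(\tau)\leq\Phi(\delta)$, and one replaces the integral bound by $G(n,c)\geq$ (something)$\,\geq -\frac{2}{\pi^2}\cdot\frac{0.069n}{n+1/2}\log\Phi(\delta)$ — i.e. one just needs $\frac{\pi^2}{2}(n+1/2)G(n,c)\geq -0.069\,n\log\Phi(\delta)$, which follows from the first estimate's computation specialized at the endpoint $t=\Phi(\delta)$ where the integrand is largest; the case $\delta=2/\pi$ giving $\lambda_n(n)\leq\frac12 e^{-0.015n}$ is then a numerical evaluation of $\Phi(2/\pi)$.

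For the third estimate \eqref{ddecay1}, valid for $n\geq c$ and $\frac{\pi}{2}(n-\log n-9)\geq c$, the relevant regime is $\tau\leq \frac{2c}{\pi(n+1/2)}\leq \frac{2}{\pi}\cdot\frac{c}{n+1/2}$ which is now bounded well away from $1$ and in fact comparable to a constant below $1$; here $\Phi(\tau)$ is bounded away from $1$ and the cheap bound $G(n,c)\geq 1-\Phi(\tau)$ only gives an exponent linear in $n$ with a constant one must check is at least $2\eta'/\pi^2\cdot(n+1/2)/(n-c)$-compatible — more precisely one wants $\frac{\pi^2}{2}(n+1/2)(1-\Phi(\tau))\geq \eta'(n-c)$. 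Since $1-\tau\geq 1-\frac{2}{\pi}\cdot\frac{c}{n}$ is bounded below and $1-\Phi(\tau)$ is comparable to $1-\tau$ away from the singular endpoint, this reduces to an explicit inequality between affine-in-$n$ quantities on the range $n\geq c$, and I would verify it by checking the boundary case $\frac{\pi}{2}(n-\log n-9)=c$ (where $n-c$ is largest relative to $n$) and using monotonicity.

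The main obstacle I anticipate is not the structure of the argument — which is a routine ``lower-bound the elliptic integral $G(n,c)$'' in each regime — but the \emph{bookkeeping of the explicit constants}: controlling the multiplicative error $A(n,c)$ from \eqref{best} (the factor $e^{\pi^2 c^2/(4n)}$ is only harmless because $n\geq$ const$\cdot c$, and for \eqref{decay1} where $n$ can be as small as $\frac{2c}{\pi}+\log c+6$ this factor is $e^{O(c)}$, so one cannot use \eqref{best} naively and must instead use \eqref{decay22} with its own error term from \cite{Bonami-Karoui3}), absorbing it into the $\frac12$ prefactor and the slightly-loosened denominator $\log c+5$ versus the ``true'' $\log c$, and checking that the threshold $c\geq 22$ and the specific numbers $\eta=0.069$, $\eta'=0.12$, $6$, $5$, $9$ all work simultaneously. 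In other words, the conceptual content is a handful of lines about $\Phi$ and $\mathbf E$, but making the stated constants honest for \emph{all} $c\geq 22$ is the delicate part and will require careful monotonicity arguments plus a few verified numerical evaluations of $\Phi$ at specific points.
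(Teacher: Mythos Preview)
Your plan diverges from the paper's proof at the very first step, and the divergence is exactly where you yourself flag the main obstacle. The paper does \emph{not} invoke \eqref{decay22} or \eqref{best} as upper bounds on $\lambda_n(c)$. Instead it integrates the exact differential identity
\[
\partial_\tau \log \lambda_n(\tau) \;=\; \frac{2\bigl(\psi_{n,\tau}(1)\bigr)^2}{\tau}
\]
over $\tau\in[c,c_n^*]$, with $c_n^*=\frac{\pi n}{2}-\log n-6$. The endpoint $c_n^*$ is chosen so that Landau's result gives $\lambda_n(c_n^*)\leq\frac12$ outright, and the integrand is bounded below via the explicit estimate $(\psi_{n,\tau}(1))^2\geq\frac{\pi}{32}\,\tau/\bigl(\sqrt{q_n}\,\mathbf K(\sqrt{q_n})\bigr)$ from \cite{Bonami-Karoui2}. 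After the substitution $t=\Phi(2\tau/\pi n)$ this yields the genuine inequality
\[
\lambda_n(c)\;\leq\;\frac12\,\exp\!\left(-\frac{\pi^2 n}{32}\int_{\Phi(2c/\pi n)}^{\Phi(2c_n^*/\pi n)}\frac{dt}{t\,(\mathbf E(t))^2}\right),
\]
with no multiplicative error term whatsoever. This sidesteps the $A(n,c)=e^{O(c)}$ blow-up you correctly worried about; the $\frac12$ is Landau's, not the one in \eqref{decay22}. Your proposed fallback --- ``use \eqref{decay22} with its own error term from \cite{Bonami-Karoui3}'' --- is the gap: you have not identified any such error bound that stays controlled through the plunge region, and the paper's ODE argument is precisely the device that makes such a bound unnecessary.

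Two further points. First, your claim that $t\,\mathbf E(t)^2\leq 1$ on $[0,1]$ is false: at $t\approx 0.8$ one has $\mathbf E(t)\approx 1.276$ (the paper's own numerics), so $t\,\mathbf E(t)^2\approx 1.3$. The paper instead bounds the integrand below by $1/\mathbf E(\Phi(2c/\pi n))^2$, using $t\leq 1$ and the monotone decrease of $\mathbf E$; the factor $\mathbf E(\Phi(2/\pi))^{-2}$ then lands in $\eta$. Second, because the paper's integral runs only up to $\Phi(2c_n^*/\pi n)<1$ rather than to $1$, an extra step is needed: the concavity of $\Phi$, together with the hypothesis $n\geq\frac{2c}{\pi}+\log c+6$ (which forces $(1-s^*)/(1-s)\leq 2/\pi$ for $s=2c/\pi n$, $s^*=2c_n^*/\pi n$), gives $\Phi(s^*)-\Phi(s)\geq(1-\tfrac2\pi)(1-\Phi(s))$. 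Your intuition that one needs $1-\Phi(\tau)\gtrsim(1-\tau)/\log\frac{1}{1-\tau}$ is correct and is how the proof finishes, via the inequality $\mathbf E(x)-1\leq(1-x)\bigl(\log\frac{1}{1-x}+4\bigr)$ from \cite{Bonami-Karoui2}. The analysis of the integral for parts \eqref{decay11} and \eqref{ddecay1} then proceeds much as you sketched, using $-\log\Phi$ in place of $1-\Phi$ and the same concavity.
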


\begin{remark} The assumption that $c\geq 22$ guarantees that there exists an integer $n$  such that $ \frac{2c}{\pi} +\log c +6 \leq n\leq c$. This condition can be relaxed with the right hand replaced by $2c$, for instance, which allows to have such an estimate for smaller values of $c$. But the constant $\eta$ is smaller.
\end{remark} 

\begin{remark} In view of \eqref{decay1} one may replace the condition on $n$ by the condition  $n\geq \frac{2c}{\pi} +\delta(\log c +6)$. This is possible as long as $\delta>2/\pi$. As one expects, the constant $\eta$ will increase with $\delta$. This constant is certainly far from being optimal. But the decrease in $n$ is optimal.
\end{remark}

  \begin{proof}
  We use a series of results of \cite{Bonami-Karoui2, Bonami-Karoui3}. 
   We recall that from Theorem 2 of \cite{Bonami-Karoui2}, we have 
   \begin{equation}\label{Eq2.1}
   \psi_{n,c}(1) \geq \left(\chi_n(c)\right)^{1/4}\sqrt{\frac{\pi}{2 \mathbf K(\sqrt{q_n})}}\left(1-\frac{3}{(1-q_n) \sqrt{\chi_n(c)}}\right),\qquad q_n= \frac{c^2}{\chi_n(c)},
   \end{equation}
   whenever the condition
  $$
   (1-q_n) \sqrt{\chi_n(c)} \geq 4
  $$ 
  is satisfied. Here, we recall that $\chi_n(c)$ is as given by \eqref{Eq1.0} and satisfying \eqref{Eqq1.0}.
  Also, from \cite[Lemma 3]{Bonami-Karoui3}, the previous condition   is satisfied whenever 
 $$
  n > \frac{2c}{\pi}+\frac{2}{\pi}\left(\log(n) +6\right),
  $$     
  or equivalently, for any 
  \begin{equation}\label{Eqq2.7}
  0 < c < c^*_n := \frac{\pi n}{2}-\log(n) - 6.
  \end{equation}
So, under this condition, we have
\begin{equation}\label{Eq2.3}
\left(\psi_{n,c}(1)\right)^2 \geq \frac{\pi}{32}
\frac{\sqrt{\chi_n(c)}}{\mathbf K(\sqrt{q_n})}=\frac{\pi}{32}\frac{c}{\sqrt{q_n}\mathbf K(\sqrt{q_n})}.
\end{equation}
 
Next, recall that $\Phi(\cdot)$ denotes the inverse of the function $t\mapsto \Psi(t)=\frac{t}{\E(t)}$. 
From \cite[Theorem 1]{Bonami-Karoui2}, we get
\begin{equation}
\label{Phi}
\Phi\left(\frac{2c}{\pi (n+1)}\right) < \sqrt{q_n} < \Phi\left(\frac{2c}{\pi n}\right).
\end{equation}
Since the function $\mathbf K(\cdot)$ is increasing on $[0,1)$, we deduce from \eqref{Eq2.3} that
\begin{equation}\label{Eq2.6}
 \frac{\left(\psi_{n,c}(1)\right)^2}{c} \geq \frac{\pi}{32}\frac{1}{ \Phi\left(\frac{2c}{\pi n}\right)\mathbf K\left( \Phi\left(\frac{2c}{\pi n}\right)\right)}.
\end{equation}
 Note that from \cite{Landau1}, we have ${\displaystyle \lambda_n(c^*_n) \leq \frac{1}{2}.}$ Moreover, from the differential
equation
$$
\partial_{\tau} \log(\lambda_n(\tau)) = 2  \frac{\left(\psi_{n,\tau}(1)\right)^2}{\tau}
$$
and by substituting $c$ by $\tau$ in \eqref{Eq2.6}, one gets for $0<c < c^*_n,$
\begin{eqnarray}
\lambda_n(c)&\leq&  \frac{1}{2}\exp\left( -2 \int_c^{c^*_n} \frac{(\psi_{n,\tau}(1))^2}{\tau}  \mathrm{d} \tau\right)\nonumber\\
&\leq& \frac{1}{2} \exp\left( -\frac{\pi}{16} \int_c^{c^*_n} \frac{ \mathrm{d} \tau}{\Phi\left(\frac{2\tau}{\pi n}\right)\mathbf K\left( \Phi\left(\frac{2\tau}{\pi n}\right)\right)}\right)
:=  \frac{1}{2}\exp\left(-\frac{\pi}{16} I_{1,n} \right).
\label{decay_eigenvalues}
\end{eqnarray}
At this point we fix $n$ and $c\geq 22$ such that $n\geq \frac{2c}{\pi} +\log c +6.$ Because of the fact that there exists such an $n$ which is bounded by $c$, for all such values of $n$ we have the inequality $c<c_n^*$, so that we can use the previous formula.

To estimate the integral $I_{1,n},$ we use a computational technique that we have developed in \cite{Bonami-Karoui3}. More precisely, we consider the substitution ${\displaystyle t= \Phi\left(\frac{2\tau}{\pi n}\right)},$ that is 
${\displaystyle \Psi(t)= \frac{2\tau}{\pi n}.}$ Since ${\displaystyle \Psi'(t)= \frac{\mathbf K(t)}{\big(\mathbf E(t)\big)^2},}$
see \cite{Bonami-Karoui3}, then we have 
$$
I_{1,n} =\frac{\pi n}{2}\int_{\Phi\left(\frac{2c}{\pi n}\right)}^{\Phi\left(\frac{2c^*_n}{\pi n}\right)} \frac{ \mathrm{d} t}{t \big(\mathbf E(t)\big)^2}.
$$
We have immediately the bound below
\begin{equation}
\label{Eq2.8}I_{1, n}\geq \frac{\pi n}{2} \frac{\Phi\left(\frac{2c^*_n}{\pi n}\right)-\Phi\left(\frac{2c}{\pi n}\right)}{\Big(\mathbf{E}(\Phi\left(\frac{2c}{\pi n}\right))\Big)^2}.\end{equation}
We can replace the denominator by the constant $\mathbf E (\Phi(2/\pi))^{-2}.$ 

Next,  note that $\mathbf{K}$ is increasing and $\mathbf{E}$ is decreasing, thus $\Psi'$ is increasing and therefore $\Psi$ is convex and $\Phi$ is concave. Hence, if $s<s^*<1$, using the fact that $\Phi(1)=1$, we have
$$
\frac{\Phi(s^*)-\Phi(s)}{s^*-s}\geq \frac{1-\Phi(s)}{1-s}.
$$
Up to now we did not use the assumption on $n$, which we do now. We have chosen $n$ sufficiently far from $c^*_n$ compared 
to the distance between $c_n^*$ and the cutoff value $\frac{2c}{\pi n}$ so that, when 
${\displaystyle s^*=\frac{2c^*_n}{\pi n},\,\, s=\frac{2c}{\pi n},}$ one  has the inequality
$$
\frac{1-s^*}{1-s}\leq 2/\pi.
$$
So the previous inequality can be replaced by 
\begin{equation}\label{Eq2.9a}
\Phi(s^*)-\Phi(s)\geq (1-\frac 2 \pi)(1- \Phi(s)).
\end{equation}
It remains to prove that 
\begin{equation}\label{final}
1-\Phi\left(\frac{2c}{\pi n}\right)\geq \frac{1-\frac{2c}{\pi n}}{\log c +5}.
\end{equation}
Assume that $x:= 1-\frac{1-\frac{2c}{\pi n}}{\log n +5},$ so that \eqref{final} can also be written as
\begin{equation}\label{final-psi}
1-\Psi(x)\leq 1-\frac{2c}{\pi n}.
\end{equation}
We then use Inequality (25) of \cite{Bonami-Karoui2}, which we recall here:
$$
\mathbf E(x)-1\leq (1-x)\left(\log\left(\frac{1}{1-x}\right)+4\right).
$$
It follows that
\begin{equation}\label{psi} 1-\Psi(x)\leq (1-x)\left(\log\left(\frac{1}{1-x}\right)+5\right).\end{equation}
We conclude for \eqref{final-psi} by using  the elementary inequality
$$
\log \left(\frac{1}{1-x}\right)=\log\left(\frac{n(\log c+5)}{n-\frac{2c}{\pi}}\right)\leq \log n,
$$
which is a consequence of the assumption that $n-\frac{2c}{\pi}\geq\log c+5 .$

We have obtained \eqref{decay1} with the constant $\eta= \frac{\pi^2}{32}\times (1-\frac 2 \pi )\times \Big(\mathbf E (\Phi(2/\pi))\Big)^{-2}.$  
The numerical constant given in the statement is obtained from the approximations $\Phi(2/\pi)\approx 0.8$ and $\mathbf E (\Phi(2/\pi))\approx 1.276.$

\bigskip

The proof of \eqref{decay11} is a slight modification of the last one. Instead of \eqref{Eq2.8} we write the a priori better estimate
\begin{equation}
\label{Eq2.8bis}I_{1, n}\geq \frac{\pi n}{2} \frac{\log \left(\Phi\left(\frac{2c^*_n}{\pi n}\right)\right)-\log \left(\Phi\left(\frac{2c}{\pi n}\right)\right)}{\Big(\mathbf{E}(\Phi\left(\frac{2c}{\pi n}\right))\Big)^2}.
\end{equation}
The function $-\log (\Phi(\cdot))$ is convex since $\Phi$ is concave and as before the numerator is bounded below by $(1-\frac{2}{\pi})(-\log \left(\Phi\left(\frac{2c}{\pi n}\right)\right).$ But $-\log \left(\Phi\left(\frac{2c}{\pi n}\right)\right)\geq -\log \Phi(\delta). $ We conclude at once.

\bigskip

Let us finally prove \eqref{ddecay1}. First, the condition $\frac{\pi}2(n -\log n -9)\geq  c$ is necessary to be able to use Theorem 1 in \cite{Bonami-Karoui2}.  Next, we now consider integrals from $c$ to $n$. We use the same notations for the new quantities involved. In particular \eqref{Eq2.8bis} is replaced by 
$$
I_{1, n}\geq \frac{\pi n}{2} \frac{\log \left(\Phi\left(\frac{2}{\pi}\right)\right)
-\log \left(\Phi\left(\frac{2c}{\pi n}\right)\right)}{\Big(\mathbf{E}(\Phi\left(\frac{2c}{\pi n}\right))\Big)^2}.
$$

The denominator can only be bounded by $\pi^2/4$. For the numerator,  we use again the convexity of the function $-\log (\Phi(\cdot))$. It follows that
$$
\frac{\log \left(\Phi\left(\frac{2}{\pi}\right)\right)
-\log \left(\Phi\left(\frac{2c}{\pi n}\right)\right)}{\frac{2}{\pi }-\frac{2c}{\pi n}}
\geq \frac {-\log \left(\Phi\left(\frac{2}{\pi }\right)\right)}{1-\frac{2}{\pi }}.
$$
The constant $\eta'$ may be taken equal to ${\displaystyle \frac{-\pi^2 \log(0.8)}{16(\pi-2)}\approx 0.12}.$ One can conclude easily for the validity of \eqref{ddecay1}.
\end{proof}

\begin{remark} Methods given in the proof of this theorem may be used for other corollaries of the estimates of \cite{Bonami-Karoui3}. The same method can be used to find bounds below of eigenvalues, as in  \cite{Bonami-Karoui3}. It is one way to see that results are  optimal.
\end{remark}

We should mention that  a decay estimate which is similar, but weaker than the one given by \eqref{decay1} can be obtained
from the non-asymptotic estimate of the cardinal of the subset $\Lambda_{\varepsilon}$ of $\mathbb N,$ given by 
$$
\Lambda_{\varepsilon} = \{ k \in \mathbb N : \lambda_k(c) \geq \varepsilon\},\qquad \varepsilon\in (0,1/2).
$$
In \cite{Os}, the author has shown that 
$$
|\Lambda_{\varepsilon} | \leq \frac{2c}{\pi} + K (\log c)^2 \log\Big(\frac{1}{\varepsilon}\Big)= \beta(\varepsilon),
$$
where $K$ is a constant independent of $c$ and $\varepsilon.$ Since the eigenvalues $\lambda_n(c)$ are arranged in the decreasing order, then the previous
inequality implies that ${\displaystyle \lambda_{\beta(\varepsilon)}(c) \leq \varepsilon,}$
or equivalently 
\begin{equation}\label{decay3}
\lambda_n(c) \leq \beta^{-1}(n)=\exp\left(-\frac{n-\frac{2c}{\pi}}{K \big(\log(c)\big)^2}\right),\quad n\geq \frac{2c}{\pi}+ K (\log 2)\big(\log(c)\big)^2 .
\end{equation}
Note that for sufficiently large $c$ and comparing to our non-asymptotic decay rate, given by \eqref{decay1}, the previous decay rate is weaker
because of the power $2$ on $\log(c)$.  Note that the optimal upper bound for the cardinal of the set
$\Lambda_{\varepsilon}$ is given by \eqref{Landau_Estimate}. 
By comparing \eqref{decay1} and \eqref{Landau_Estimate}, one concludes that our non-asymptotic decay estimate of the $\lambda_n(c)$ given by 
\eqref{decay1} is optimal, in the considered part of the plunge region of the spectrum.

\section{Applications}

In this paragraph, we describe two applications of our non-asymptotic estimates of the spectrum of the Sinc-kernel operator. The first application is related to the estimate of constants, associated with  two concentration inequalities. The second application concerns the  probability of a hole in the spectrum of a fine scaled random matrix from the GUE.

\subsection{Estimates of constants in Remez and Tur\`an-Nazarov concentration inequalities.}

We first recall that for $f$ a $2\pi-$periodic function, we have
$$
\norm{f}_{L^2(\TT)}=\left(\frac{1}{2\pi}\int_{-\pi}^{\pi}|f(t)|^2\,\mbox{d}t\right)^{1/2}.
$$
We consider the $n+1$-th eigenvalue $\lambda_n(c)$ for $n>0$. According to the min-max principle, we have  
$$
\lambda_n(c)=\sup_{V\in\mathcal{V}_n(c)}\inf_{f\in V\setminus\{0\}}\frac{\|f\|^2_{L^2(-1, +1)}}{\|f\|^2_{L^2(\R)}},
$$
where $\mathcal{V}_n(c)$ is the set of $n+1$-dimensional subspaces of $\mathcal B_c$.
By invariance by dilation and modulation, for every $M>0$ and every interval $I$ of length $|I|=2Mc$,
this quantity is equal to 
$$
\sup_{V\in\mathcal{V}_n(I)}\inf_{f\in V\setminus\{0\}}\frac{\|f\|^2_{L^2(-1/M, +1/M)}}{\|f\|^2_{L^2(\R)}},
$$
where $\mathcal{V}_n(I)$ is the set of $n+1$-dimensional subspaces of $\mathcal B_I:=\{f\in L^2(\R)\,:\supp\widehat{f}\subset I\}$. 
Now choose $M$ so that $2Mc \geq n+1$. Take $\ffi$ to be a fixed $L^2$ function such that $\supp\widehat{\ffi}\subset[-1/2,1/2]$
and let $V_{n,\ffi}=\{P(e^{it})\ffi\,:P\in\mathcal{P}_{n}\}$ where $\mathcal{P}_{n}$ is the set of all polynomials of degree at most $n$.
Note that, if $f\in V_{n,\ffi}$ then $f=\dst\sum_{k=0}^{n}\widehat{P}(k)e^{ikt}\ffi(t)$ so that
$$
\widehat{f}(\xi)=\sum_{k=0}^{n}\widehat{P}(k)\widehat{\ffi}(\xi-k).
$$
In particular, $\supp\widehat{f}\subset[-1/2,1/2+n]$, an interval of length $n+1=2Mc$.
Further,
\begin{multline*}
\norm{f}_{L^2(\RR)}^2=\frac{1}{2\pi}\|\widehat{f}\|_{L^2(\R)}^2
=\frac{1}{2\pi}\sum_{k=0}^{n}|\widehat{P}(k)|^2\|\widehat{\ffi}\|_{L^2(\RR)}^2\\
=\norm{P}_{L^2(\TT)}^2\norm{\ffi}_{L^2(\RR)}^2=\frac{1}{2\pi} \norm{P(e^{it})}_{L^2(-\pi,\pi)}^2\norm{\ffi}_{L^2(\RR)}^2.
\end{multline*}
On the other hand
\begin{multline*}
\norm{f}_{L^2(-1/M,1/M)}^2=\int_{-1/M}^{1/M}\abs{\sum_{k=0}^{n}\widehat{P}(k)e^{i kt}}^2|\ffi(t)|^2\,\mbox{d}t\\
\geq \min_{[-1/M,1/M]}|\ffi(t)|^2\norm{P}_{L^2(-1/M,1/M)}^2.
\end{multline*}
We choose $\ffi$ by setting $\widehat{\ffi}(\xi)=\mathbf{1}_{[-1/2,1/2]}\cos \pi\xi$ so that
$$
\|\ffi\|_{L^2(\mathbb R)}^2=\frac{1}{2\pi}\int_{-1/2}^{1/2}\cos^2\pi\xi\,\mbox{d}\xi=\frac{1}{4\pi}
$$
and
\begin{eqnarray*}
\ffi(x)&=&\frac{1}{4\pi}\int_{-1/2}^{1/2} e^{i (\pi+x)\xi}+e^{i(-\pi +x)\xi}\,\mbox{d}\xi
=\frac{1}{2\pi}\frac{\sin(\pi+x)/2}{\pi +x}+\frac{1}{2\pi}\frac{\sin(-\pi +x)/2}{-\pi +x}\\
&=&\begin{cases}\dst\frac{\cos x/2}{\pi^2-x^2}&\mbox{if }x\not=\pm\pi\\[2pt]
\dst\frac{1}{4\pi}&\mbox{if }x=\pm\pi\end{cases}.
\end{eqnarray*}
Next, let $M\geq\dst\frac{1}{\pi}$, {\it i.e.} $n+1\geq\frac{2}{\pi}c$. Then, for $x\in[-1/M,1/M]$, $\dst \ffi(x)\geq \ffi(1/M)\geq \ffi(\pi)=\frac{1}{4\pi}$.

Therefore, for any Taylor polynomial $P$ of degree at most $n$,
\begin{equation}\label{Ineq4.1}
2 \lambda_n(M c)\geq \frac{\norm{P(e^{it})}_{L^2(-1/M,1/M)}^2}{\norm{P(e^{it})}_{L^2(-\pi,\pi)}^2}.
\end{equation}

\begin{definition}
Let $\Gamma_2(n, \varepsilon)$ be the best constant in the Remez type inequality
$$
\int_{I}|P(e^{it})|^2\,\mbox{d}t\geq \Gamma_2(n, \varepsilon)\int_0^{2\pi}|P(e^{it})|^2\,\mbox{d}t,
$$
for any $P$ is a Taylor polynomial of degree $n$ and  any $I$ interval of length $\varepsilon$.
\end{definition}

We have just shown the following:

\begin{prop}
Let $c>0$ and $n\dst\geq\dst\frac{2c}{\pi}-1$ be an integer, then
$$
2\, \lambda_n(c)\geq \Gamma_2\left(n, 4\frac{c}{n+1}\right)
$$
or, equivalently, for $0<\varepsilon<2\pi$
$$
\Gamma_2\left(n, \eps\right)\leq 2\, \lambda_n\left(\frac{n+1}{4}\eps\right).
$$
In particular, for $0<\varepsilon<4$ and $n\geq 33$,
$$
\Gamma_2\left(n, \eps\right)\leq  \exp\left(\frac{-n}{32} \log\frac{4}{\eps}\right).
$$
\end{prop}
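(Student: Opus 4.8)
The plan is to get the first inequality directly out of \eqref{Ineq4.1}, and then to deduce the two displayed forms by a dilation of the argument of $\lambda_n$ together with one rotation‑invariance observation. In \eqref{Ineq4.1} the admissibility condition on $M$ is $M\geq 1/\pi$, which with $M=\frac{n+1}{2c}$ is exactly the hypothesis $n\geq\frac{2c}{\pi}-1$; for that choice $\frac1M=\frac{2c}{n+1}$, so \eqref{Ineq4.1} reads
$$
2\lambda_n(c)\geq\frac{\norm{P(e^{it})}_{L^2(-2c/(n+1),\,2c/(n+1))}^2}{\norm{P(e^{it})}_{L^2(-\pi,\pi)}^2},\qquad P\in\mathcal P_n .
$$
Now $\norm{P(e^{it})}_{L^2(-\pi,\pi)}^2=\int_0^{2\pi}|P(e^{it})|^2\,\mathrm dt$, and the substitution $P(z)\mapsto P(e^{-i\theta}z)$ changes neither $\deg P$ nor either of the two norms above, while translating the window; hence the interval $[-\frac{2c}{n+1},\frac{2c}{n+1}]$, of length $\frac{4c}{n+1}$, may be replaced by an arbitrary interval of that length. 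Taking the infimum over $P\in\mathcal P_n$ and using the definition of $\Gamma_2$ gives $\Gamma_2\bigl(n,\tfrac{4c}{n+1}\bigr)\leq 2\lambda_n(c)$. The equivalent form follows by setting $c=\frac{n+1}{4}\varepsilon$, under which the hypothesis becomes $\varepsilon\leq 2\pi$; and since $\varepsilon\mapsto\Gamma_2(n,\varepsilon)$ is non‑decreasing, $\varepsilon<2\pi$ is enough.

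For the explicit bound I would show $2\lambda_n\bigl(\tfrac{n+1}{4}\varepsilon\bigr)\leq\bigl(\tfrac\varepsilon4\bigr)^{n/32}$ for $0<\varepsilon<4$, $n\geq33$, by inserting the earlier upper estimates for $\lambda_n$, with $c'=\frac{n+1}{4}\varepsilon\in(0,n+1)$ in the role of $c$. While $\varepsilon$ stays bounded away from $4$ — concretely while $c'\leq\frac{2n}{e}$, with some room, i.e.\ roughly $\varepsilon\lesssim\tfrac8e$ — the bound \eqref{eq:thmain3} (equivalently \eqref{Eq3.1}) applies and yields $\lambda_n(c')\leq\bigl(\tfrac{ec'}{2(n+1)}\bigr)^{2n+1}=\bigl(\tfrac{e\varepsilon}{8}\bigr)^{2n+1}$; since $2n+1$ far exceeds $n/32$, the elementary inequality $2\bigl(\tfrac{e\varepsilon}{8}\bigr)^{2n+1}\leq\bigl(\tfrac\varepsilon4\bigr)^{n/32}$ holds on this range once $n\geq33$, the worst case being the upper end. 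For the complementary range, where $\varepsilon$ is close to $4$ and $c'$ is a definite fraction of $n+1$, I would use \eqref{decay11} with $\delta=\tfrac2\pi$ (its hypotheses $c'\geq22$ and $n\geq\tfrac{2c'}{\pi}+\log c'+6$ hold there for $n\geq33$), giving $\lambda_n(c')\leq\tfrac12\bigl(\Phi(\tfrac2\pi)\bigr)^{0.069n}\approx\tfrac12(0.8)^{0.069n}$; in this range $\log\tfrac4\varepsilon$ is small (of order $\log\tfrac e2\approx0.31$ at most), so $\bigl(\tfrac\varepsilon4\bigr)^{n/32}$ still dominates $(0.8)^{0.069n}$, which closes the gap. (In the thin final sliver where $c'>n$, i.e.\ $\varepsilon$ just below $4$, one takes $\delta=\tfrac{2c'}{\pi n}\in(\tfrac2\pi,1)$ in \eqref{decay11} instead.)

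The main obstacle is the constant‑chasing in this last part. Splitting $(0,4)$ is unavoidable: the validity window $n\geq\tfrac{ec'}{2}$ of \eqref{eq:thmain3} corresponds to $\varepsilon\lesssim\tfrac8e<4$, and \eqref{eq:thmain3} is moreover lossy near the edge of that window, whereas \eqref{decay11} — a plunge‑region estimate — is useless for small $\varepsilon$; so two estimates must be patched together on an overlap, and one has to verify, with the concrete numbers $\tfrac1{32}$ and $n\geq33$, that each dominates $(\varepsilon/4)^{n/32}$ on its part. The rest — the rotation‑invariance reduction and the dilation $c=\tfrac{n+1}{4}\varepsilon$ — is routine. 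A cleaner but non‑explicit route would be to feed the two‑sided bound \eqref{best} of \cite{Bonami-Karoui3} into $\lambda_n(c')$, at the cost of carrying the unspecified constants $\delta_1,\delta_2,\delta_3$.
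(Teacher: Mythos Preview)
Your proposal is correct and, for the first two displayed inequalities, coincides with the paper's argument: the paper derives \eqref{Ineq4.1} in the paragraphs preceding the proposition and then states the proposition as a direct consequence, exactly via the rotation-invariance reduction and the substitution $c=\tfrac{n+1}{4}\varepsilon$ that you spell out. (Note that the ``$Mc$'' in the paper's \eqref{Ineq4.1} is a typo for ``$c$''; your reading is the one consistent with the set-up and with the proposition itself.)

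For the ``in particular'' clause, the paper gives no argument at all --- it simply announces the bound --- so there is nothing to compare against. Your two-range strategy (use \eqref{Eq3.1} for $\varepsilon$ below roughly $8/e$, and \eqref{decay11} for $\varepsilon$ near $4$) is a sound way to supply the missing verification. The numerics do close: on the overlap, \eqref{Eq3.1} gives $2\lambda_n(c')\le 2\bigl(\tfrac{e\varepsilon}{8}\bigr)^{2n+1}$, which one checks is $\le(\varepsilon/4)^{n/32}$ for $\varepsilon\le 2.85$ when $n=33$ (and up to slightly below $8/e$ for larger $n$); meanwhile \eqref{decay11} with $\delta=2/\pi$ gives $2\lambda_n(c')\le(0.8)^{0.069n}$, which is $\le(\varepsilon/4)^{n/32}$ as soon as $\varepsilon\ge 2.44$, and its hypothesis $c'\ge 22$ holds from about $\varepsilon\ge 2.6$ when $n=33$ (earlier for larger $n$). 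The ``thin sliver'' $\varepsilon\in[\tfrac{4n}{n+1},4)$ is handled as you say, with $\delta=\tfrac{2c'}{\pi n}\le 1$; there $(\varepsilon/4)^{n/32}$ is close to $1$ while $\Phi(\delta)^{0.069n}$ stays bounded away from $1$, so the inequality holds with room. Your caveat that the constant-chasing is the only real work is accurate.
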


\begin{rem}
In the particular case where $M=\frac{1}{\pi}$ in the inequality \eqref{Ineq4.1} and since the Remez constant is  then equal to $1$ in this case, one concludes that 
\begin{equation}
\label{Ineqq4.1}
\lambda_{n+1}\left(\frac{\pi}{2}(n+1)\right)\geq \frac{1}{2},
\end{equation}
which is the  Landau's lower bound for  ${\displaystyle \lambda_{n}\left(\frac{\pi}{2} n\right).}$
\end{rem}
This is related to the following Tur\`an-Nazarov inequality, see \cite{Nazarov}. Let $\TT$ be the unit circle and let $\mu$ be the Lebesgue measure on $\TT,$ normalized so that $\mu(\TT)=1,$ then for every $0\leq q \leq 2,$ every 
trigonometric polynomial 
$$
{\displaystyle P(z)=\sum_{k=1}^{n+1} a_k z^{\alpha_k},\quad a_k\in \mathbb C,\quad z\in \TT,}
$$
and every measurable subset $E\subset \TT,$ with $\mu(E)\geq \frac{1}{3},$ we have
\begin{equation}
\label{Ineq_Turan}
\| P \|_{L^q(E)} \geq e^{-A \, n\,  \mu(\TT\setminus E)}  \| P\|_{L^q(\TT)}.
\end{equation}
Here, $A$ is a constant independent of $q,$ $E$ and $n.$ It has been mentioned in \cite{Nazarov} that if moreover, 
the measurable subset $E$ is not too large in the sense that ${\displaystyle \mu(E)\leq 1-\frac{\log n}{n},}$ 
then the previous inequality holds also for $q>2.$ 

To  find a bound above for $\gamma_2(n, \varepsilon)=\sqrt{\Gamma_2(n,\varepsilon)},$ where 
$\Gamma_2(n,\varepsilon)$ is given by the previous definition, 
it suffices to note that from our previous analysis, we have for $\varepsilon= 2 M^{-1}\leq \frac{8}{e},$
$$
\gamma_2(n,\varepsilon)\leq  \sqrt{2} \sqrt{\lambda_{n+1}\left(\frac{n+1}{2}\varepsilon\right)}.
$$ 

But from \eqref{Eq3.1}, with ${\displaystyle c=\frac{n+1}{4} \varepsilon,}$ one gets 
\begin{equation}\label{bound_gamma}
\gamma_2(n,\varepsilon)\leq \sqrt{2} \exp\left(-(n+1/2) \log\Big(\frac{8}{e\varepsilon}\Big)\right).
\end{equation}

We compare the  inequality \eqref{bound_gamma} with \eqref{Ineq_Turan} with the special cases, $q=2,$
$$
E=E_M=\{ e^{it},\,\, t\in (-1/M, 1/M)\},\quad \frac{1}{\pi}\leq M\leq \frac{3}{\pi},
$$
is an arc of $\TT.$ The upper bound on $M$ is fixed by the requirement that 
${\displaystyle \mu(E)=\frac{\varepsilon}{2\pi}=\frac{1}{\pi M}\geq 1/3.}$
In this case, we have 
$$
\mu(\TT\setminus  E)= 1-\frac{\varepsilon}{2\pi} =1-\frac{2}{\pi M}.
$$
Consequently, the constant $A$ appearing in \eqref{Ineq_Turan} satisfies the inequality
$$
-A n \left(1-\frac{\varepsilon}{2\pi}\right)\leq \log(\sqrt{2})-(n+1/2) \log\left(\frac{8}{e\varepsilon}\right),\quad 
\forall\,\,\, \frac{2\pi}{3}\leq \varepsilon \leq 2\pi.
$$
By straightforward computations, the previous inequality holds true whenever $A\geq \frac{1}{3}$ and $n\geq 2.$

\subsection{An estimate of the hole probability in the spectrum of a random matrix from the GUE}

We first recall that an $n\times n$ random Hermitian matrix $M_n=\big[X_{jk}\big]_{1\leq j,k\leq n}$ 
is said to belong to the GUE, if  the $X_{jk}$ are i.i.d. random variables with $X_{kj}=\overline{X_{jk}},$ $X_{jj}\sim N(0,1)$ 
and $Re(X_{jk})\sim N(0,1/2),\ Im(X_{jk}) \sim N(0,1/2),$ if $j\neq k.$ Following the notation used in \cite{Tao}, 
the coarse-scaled and the fine-scaled versions of the matrix $M_n$ from the GUE, are defined by 
${\displaystyle W_n= \frac{1}{\sqrt{n}} M_n,\,\, \, A_n = \sqrt{n} M_n.}$
The famous Wigner's semi-circle law states that the empirical measure
${\displaystyle \frac{1}{n} \sum_{i=1}^n \delta_{\lambda_i(W_n)}}$ 
converges weakly in distribution to the semi-circle distribution function, given by 
${\displaystyle \rho_{sc}(x)=\frac{1}{2\pi} \sqrt{4-x^2} {\bf 1}_{[-2,2]}(x).}$
The bulk region of the spectrum of $A_n,$ the fine-scaled version of $M_n,$ corresponds to the eigenvalues of $A_n$ 
which are close to $n u$ for a given $u\in (-2,2).$ It is well known, see for example \cite{Tao} 
that for $a,b\in \mathbb R,$ the  probability the interval
$\Big[ n u +\frac{a}{\rho_{sc}(u)}, n u +\frac{b}{\rho_{sc}(u) }\Big]$ does not contain any eigenvalues of the matrix $A_n=\sqrt{n} M_n,$ converges 
as $n\rightarrow +\infty$ to the Fredholm determinant
$$
\det(Id-\qq_{s})=\dst\prod_{n=0}^\infty \big(1-\lambda_n(s) \big),\quad s= \frac{\pi}{2}(b-a).
$$
In particular, in the centre of the bulk region, that is for $u=0$ and $\rho_{sc}(0)=1/\pi,$ the previous limiting probability that an interval
of length $\pi c$ has no eigenvalue is given by  the following infinite product.
$$ 
E_2(0,c) = \prod_{n=0}^{\infty} \big(1- \lambda_n(\frac{\pi}{2}c)\big).
$$
To get an upper bound for  the previous probability, we first use the estimate \eqref{lower_bound} and write
$$
1- \lambda_k\big(\frac{\pi}{2}c\big) \leq 7\sqrt{\frac{2}{\pi c}} \frac{(\pi c)^k}{k!} e^{-\frac{\pi}{2}c}.
$$
Next, for $ \eta> 1,$ we consider the finite product 
\begin{equation}\label{Eq1}
 F_{\eta}=\prod_{k=0}^{\eta-1} 7\sqrt{\frac{2}{\pi c}} \frac{(\pi c)^k}{k!} e^{-\frac{\pi}{2}c}= e^{-\frac{\pi}{2}c \eta} \left(7\sqrt{\frac{2}{\pi c}}\right)^{\eta} A_{\eta},\qquad  A_{\eta}=  \prod_{k=0}^{\eta-1} \frac{(\pi c)^k}{k!}.
\end{equation}
To bound the quantity $A_{\eta},$ we note that $k!=\Gamma(k+1)\geq \sqrt{2e} \left(\frac{k +\frac{1}{2}}{e}\right)^{k+\frac{1}{2}},$ so that we have
\begin{equation}\label{Eq2}
 A_{\eta} \leq (2e)^{-\eta/2} \prod_{k=0}^{\eta-1} \left(\frac{\pi e c}{k+\frac{1}{2}}\right)^k \sqrt{\frac{e}{k+\frac{1}{2}}}= (2e)^{-\eta/2} B_{\eta}.
\end{equation}
Note that 
$$
\log B_{\eta}=\frac{\eta (\eta-1)}{2} \log(\pi e c)+\frac{\eta}{2}-\sum_{k=0}^{\eta-1} \big(k+\frac{1}{2}\big)\log\big(k+\frac{1}{2}\big).
$$
Moreover, since
$$
\sum_{k=0}^{\eta-1} \big(k+\frac{1}{2}\big)\log\big(k+\frac{1}{2}\big)\geq \int_{\frac{1}{2}}^{\eta-\frac{1}{2}} x \log x \, dx 
-\frac{1}{2}\log 2,
$$
then straightforward computations show that 
$$
\log B_{\eta} \leq  \frac{1}{2} n(\eta-1) \log\left(\frac{\pi e c}{\eta-\frac{1}{2}}\right)+\frac{1}{4} \big(\eta-\frac{1}{2}\big)^2 +\frac{\eta}{2}-\frac{1}{7} \log\big(\eta-\frac{1}{2}\big).
$$
In particular, for the convenient  choice of 
\begin{equation}\label{Eq3}
\eta=\eta_{\gamma}=\frac{\pi}{2}\frac{c}{\gamma} +\frac{1}{2},\quad \gamma= 2.7,
\end{equation}
one gets
\begin{equation}\label{Eq4}
\log B_{\eta}\leq \frac{1}{2} \left(\frac{\pi^2}{4}\frac{c^2}{\gamma^2}-\frac{1}{4}\right)\log(2 e \gamma)+\frac{\pi^2}{16 \gamma^2} c^2 
+\frac{\pi}{4 \gamma} c -\frac{1}{7} \log\left(\frac{\pi c}{4 \gamma}\right)+\frac{1}{4}. 
\end{equation}
On the other, we already know that ${\displaystyle \lambda_n(\frac{\pi}{2}c)\geq \frac{1}{2},}$ for all $n\leq c.$ Consequently, we have
\begin{equation}\label{Eq4-2}
G_{\eta_{\gamma}}=\prod_{n=n_{\gamma}}^{c} \big(1- \lambda_n(\frac{\pi}{2}c)\big)\leq \exp\left(-c\big(1-\frac{\pi}{2\gamma}\big)
 \log 2\right)=\exp(-c\,  a_{\gamma}).
\end{equation}
By combining \eqref{Eq1}--\eqref{Eq4-2} and taking into account that for all $k\geq 0,$  $ 1- \lambda_k\big(\frac{\pi}{2}c\big)\leq 1,$ one can easily check that
\begin{equation}\label{Eq5}
E_2(0,c)\leq  F_{\eta_{\gamma}} G_{\eta_{\gamma}} \leq e^{-\alpha c^2} \exp\left(-\beta c\Big(\log(\sqrt{\pi e c}/7)-1/2+\frac{a_{\gamma}}{\beta}\Big)-\frac{1}{7}\log(\beta c)\right),
\end{equation}
where
\begin{equation}\label{Eq6}
\alpha=\frac{\pi^2}{4\gamma}-\frac{\pi^2}{8\gamma^2}\log(2 e \gamma) -\frac{\pi^2}{16 \gamma}\approx 0.375,\quad  \beta =\frac{\pi}{2\gamma}\approx 0.586,\quad  a_{\gamma}=1-\frac{\pi}{2\gamma}.
\end{equation}
In particular, for $c\geq 5,$ we have $-\beta c\Big(\log(\sqrt{\pi e c}/7)-1/2+\frac{a_{\gamma}}{\beta}\Big)-\frac{1}{7}\log(\beta c)\leq 0,$ so that 
$E_2(0,c) \leq e^{-\alpha c^2},\,\, \alpha\approx 0.374.$
We have just proved the following proposition.

\begin{proposition}
Let  $c\geq 5,$ then  a bound above of  the probability $E_2(0,c)$ is given by 
\begin{equation}\label{bound_above1}
E_2(0,c)\leq    e^{-\alpha c^2},\quad  \alpha \approx  0.374.
\end{equation}
\end{proposition}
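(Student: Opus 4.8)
The plan is to bound the infinite product $E_2(0,c)=\prod_{n\geq 0}\bigl(1-\lambda_n(\tfrac{\pi}{2}c)\bigr)$ by splitting the index set into three ranges, according to where $n$ sits relative to the critical index of $\qq_{\pi c/2}$, which is $\frac{2}{\pi}\cdot\frac{\pi c}{2}=c$. For small $n$ I would use the lower bound \eqref{lower_bound} with $c$ replaced by $\tfrac{\pi}{2}c$, which gives $1-\lambda_k(\tfrac{\pi}{2}c)\leq 7\sqrt{2/(\pi c)}\,\frac{(\pi c)^k}{k!}e^{-\pi c/2}$, valid as long as $k\leq\frac{\pi c}{2\cdot 2.7}$. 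Choosing the cutoff $\eta_\gamma=\frac{\pi c}{2\gamma}+\frac12$ with $\gamma=2.7$ exactly respects this constraint, and these factors produce the leading $e^{-\alpha c^2}$ decay. For $\eta_\gamma\leq n\leq c$ I would instead use Landau's bound $\lambda_n(\tfrac{\pi}{2}c)\geq\frac12$, valid throughout the slow region $n\leq c$, so each such factor is at most $\frac12$. For $n>c$ I would simply use $1-\lambda_n(\tfrac{\pi}{2}c)\leq 1$.

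The core estimate is the product over the first range, $F_{\eta_\gamma}=\prod_{k=0}^{\eta_\gamma-1}7\sqrt{2/(\pi c)}\,\frac{(\pi c)^k}{k!}e^{-\pi c/2}$. I would factor out the $k$-independent pieces, $F_\eta=e^{-\pi c\eta/2}\bigl(7\sqrt{2/(\pi c)}\bigr)^\eta A_\eta$ with $A_\eta=\prod_{k=0}^{\eta-1}(\pi c)^k/k!$, and bound $A_\eta$ from above using the elementary inequality $\Gamma(k+1)\geq\sqrt{2e}\,\bigl((k+\tfrac12)/e\bigr)^{k+1/2}$ already invoked earlier in the paper. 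Taking logarithms turns $\log A_\eta$ into an arithmetic sum, $\frac{\eta(\eta-1)}{2}\log(\pi e c)+\frac{\eta}{2}-\sum_{k=0}^{\eta-1}(k+\tfrac12)\log(k+\tfrac12)$, and I would bound the last sum below by $\int_{1/2}^{\eta-1/2}x\log x\,dx-\tfrac12\log 2$. Substituting $\eta=\eta_\gamma$ then yields an explicit polynomial-in-$c$ upper bound for $\log F_{\eta_\gamma}$, whose $c^2$ contributions come from the $e^{-\pi c\eta/2}$ factor on the one hand and from the $\frac14(\eta-\tfrac12)^2$ and $\frac12(\eta-\tfrac12)^2\log(2e\gamma)$ terms of $\log A_\eta$ on the other.

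Finally I would collect the three pieces: $E_2(0,c)\leq F_{\eta_\gamma}G_{\eta_\gamma}$ with $G_{\eta_\gamma}=\prod_{\eta_\gamma\leq n\leq c}(1-\lambda_n(\tfrac{\pi}{2}c))\leq\exp\bigl(-c(1-\tfrac{\pi}{2\gamma})\log 2\bigr)$, and after combining the quadratic terms the coefficient of $-c^2$ works out to $\alpha\approx 0.375$ at $\gamma=2.7$, the remaining terms being linear or logarithmic in $c$; one then checks these remaining terms are $\leq 0$ once $c\geq 5$, which gives $E_2(0,c)\leq e^{-\alpha c^2}$ with $\alpha\approx 0.374$. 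The main obstacle — really the only delicate point — is the bookkeeping: ensuring the negative quadratic term from $e^{-\pi c\eta/2}$ dominates the positive quadratic terms produced by the Stirling estimate for $A_\eta$, and picking $\gamma$ (namely $2.7$) so that both the validity range $k\leq \frac{\pi c}{2\cdot 2.7}$ of \eqref{lower_bound} is honoured and $\alpha$ stays comfortably positive; everything else reduces to routine calculus.
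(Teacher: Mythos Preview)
Your proposal is correct and follows the paper's own argument essentially step for step: the same three-range split at $\eta_\gamma=\frac{\pi c}{2\gamma}+\tfrac12$ and $c$, the same use of \eqref{lower_bound} (with $c$ replaced by $\tfrac{\pi}{2}c$) for the first block, the same Stirling-type lower bound on $k!$ and integral comparison for $\sum(k+\tfrac12)\log(k+\tfrac12)$, Landau's bound $\lambda_n(\tfrac{\pi}{2}c)\ge\tfrac12$ for the middle block, and the trivial bound for $n>c$. The resulting constant $\alpha$ and the verification that the lower-order terms are nonpositive for $c\ge 5$ are exactly what the paper obtains.
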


Note that an   asymptotic expression of   the probability $E_2(0,c),$ for $c\gg 1,$ has been 
given in \cite{dCM} 
\begin{equation}\label{bound_above2}
E_2(0,c)\simeq  \kappa \exp\left(-\frac{\pi^2}{8} c^2  -\frac{1}{4}\log( c)\right),
\end{equation}
for some positive constant  $\kappa.$  Our proposed non-asymptotic bound above of $E_{2}(0,c)$ is not optimal comparing to the previous asymptotic one,
nonetheless, it is a bound above that is valid even for intervals with relatively small lengths $c\geq 5.$

\end{document}